\numberwithin{equation}{section}
\newtheorem{theorem}{Theorem}[section]
\newtheorem{corollary}[theorem]{Corollary}
\newtheorem{lemma}[theorem]{Lemma}
\newtheorem{proposition}[theorem]{Proposition}
\theoremstyle{definition}
\newtheorem{definition}[theorem]{Definition}
\newtheorem{remark}[theorem]{Remark}
\newcommand{\E}{\mathbb{E}}
\newcommand{\p}{\mathbb{P}}
\newcommand{\N}{\mathbb{N}}
\newcommand{\Z}{\mathbb{Z}}
\newcommand{\var}{\mathrm{Var}}
\newcommand{\R}{\mathbb{R}}
\newcommand{\dist}{\mathrm{dist}}
\newcommand{\F}{\mathscr{F}}
\newcommand{\Ff}{\mathfrak{f}}
\newcommand{\FD}{\mathfrak{D}}
\newcommand{\CA}{\mathcal{A}}
\newcommand{\CB}{\mathcal{B}}
\newcommand{\CD}{\mathcal{D}}
\newcommand{\CE}{\mathcal{E}}
\newcommand{\CG}{\mathcal{G}}
\newcommand{\CL}{\mathcal{L}}
\newcommand{\CN}{\mathcal{N}}
\newcommand{\CR}{\mathcal{R}}
\newcommand{\CS}{\mathcal{S}}
\newcommand{\CT}{\mathcal{T}}
\newcommand{\CV}{\mathcal{V}}
\newcommand{\SN}{\mathscr{N}}
\newcommand{\Exp}{\mathrm{Exp}}
\newcommand{\Area}{\mathrm{Area}}
\newcommand{\Unif}{\mathrm{U}}
\newcommand{\rad}{\mathrm{rad}}
\newcommand{\Fs}{\mathrm{Fs}}
\newcommand{\Po}{\mathrm{Po}}
\newcommand{\BBM}{\mathrm{BBM}}
\newcommand{\half}{\mathrm{half}}
\newcommand{\origin}{\mathbf{0}}
\renewcommand{\epsilon}{\varepsilon}
\newcommand{\ol}{\overline}
\newcommand{\wt}{\widetilde}
\begin{document}

\title{Scaling properties of a random Yule tree embedded in $\R^\FD$}

\author{Lukas Schoug}
\address{Department of Mathematics and Statistics, University of Helsinki, Finland}
\email{lukas.schoug@helsinki.fi}


\begin{abstract}
We study a random tree, which was introduced in \cite{antz2015network} as part of a model of a neuronal network.  Realising a scaling relation for the law of the tree, we can use elementary techniques to derive asymptotic results on the geometry as time goes to infinity.

\smallskip
\noindent \textbf{Keywords:} Scaling, birth process, random growth model.

\noindent \textbf{MSC2020 subject classifications:} Primary 60J80, Secondary 60G50.
\end{abstract}

\maketitle

\parindent 0 pt
\setlength{\parskip}{0.20cm plus1mm minus1mm}

\section{Introduction}
We study the growth of a random tree, $(\CT_\lambda(t))_{t \geq 0}$, in $\R^\FD$, $\FD \in \N^+ = \{1,2,\dots\}$, starting from the origin and growing at unit speed in a random direction, branching after a random exponentially distributed time. Each new interval formed by branching evolves independently of the others as the first interval and branches in the same manner. (This tree can be realised in the same way as a branching Brownian motion, where the branching rate for a particle is $\lambda$, the Brownian trajectories replaced by lines growing at speed $1$ and drawn in a uniform direction, and where one keeps track of the entire history of the process.) In particular, this is an embedding of a Yule process with rate $\lambda$ (that is, a birth process with individual birth rate $\lambda$) into $\R^\FD$ as a tree, such that each time interval between births is embedded as a line in $\R^\FD$, attached to the endpoint of the line corresponding to the previous birth of that particular branch, and growing in a uniformly randomly chosen direction. As such, we call $\CT_\lambda(t)$ a Yule tree.

This model was introduced in two dimensions in \cite{antz2015network} to model a neuronal network and was studied further in \cite{acdt2019trees}, \cite{gt2019decay} and \cite{goriachkin2023thesis} (the latter paper also considering a three-dimensional version). In those papers each such random tree was supposed to simulate a neuron and the authors studied networks consisting of many such trees (we will not touch upon the biology background --- for more information on that, see \cite{antz2015network} and the references therein) and the growth of the trees define a directed graph. The model is studied both analytically and statistically: in \cite{antz2015network} the moment generating function of the total length of the network, as well as a functional equation for connection probabilities are derived and simulations predicting features of the model, such as density, length distribution etc.\ are carried out. In \cite{acdt2019trees}, connectivity properties related to the graph structure of the network were studied, deriving expectations of in- and out-degrees, and numerical experiments were performed, estimating frequencies of connection, shortest paths, in- and out-degrees and clustering coefficients. Next, in \cite{gt2019decay}, the authors study the decay of the connection probabilities, by numerically solving the functional equations describing the connection probabilities obtained in \cite{antz2015network} and the corresponding equations in three dimensions (which they derive in the paper).  Finally, in Paper~IV of \cite{goriachkin2023thesis}, the author studies a the distance between branching points and the origin, as well as a version where the branches of the tree stop growing randomly at a prescribed rate.

Extensive work has also been put into simulating neuronal networks, providing statistical properties of the models. A recent simulation environment is NETMORPH, developed in \cite{koeneetal2009netmorph} and studied and evaluated further in \cite{ammhtl2011netmorph} and \cite{mm2013thesis}. The tree $\CT_\lambda$ can be simulated in that environment as well\footnote{The simulations of $\CT_\lambda$ in this paper, however, are not done using NETMORPH.}.

However, interesting as this process is, it is not yet well understood. While simulations and statistical analyses have been carried out, relatively few results about the model have been rigorously proved. Natural questions, such as the radius or shape turn out to be cumbersome much due to the fact that each branching penalises the growth of a branch (but can still help the ``average'' growth of the tree, as in the growth in all directions simultaneously, since there will be more branches growing). Indeed, the behaviour of a branch depends heavily on how many times it branches and increasing the branching intensity to infinity actually makes an entire branch (grown in a finite time) converge to $\origin = \{ (0,\dots,0) \} \subset \R^\FD$ in the topology determined by the Hausdorff distance (see Lemma~\ref{lem:branch_to_zero}). Moreover, the heavy dependence of a branch on the number times it branches makes it difficult to use many-to-one and many-to-few formulas, which are powerful tools in the theory of branching processes used to deduce results on the whole branching structure from the law of an individual branch (see e.g.\ \cite{hr2017manytofew} and \cite{hhkp2022manytofew}).

In this paper, we study the geometry of $\CT_\lambda$ from a completely analytical perspective. The main features of interest in this paper are the radius and shape of the tree, for fixed intensity $\lambda > 0$ and large times $t>0$ and for fixed time and high intensities. As we shall see, there is a certain correspondence between these regimes and the parts of Theorems~\ref{thm:main_result} and~\ref{thm:main_result_1} are deduced from the structure of $\CT_\lambda(1)$ in the high intensity regime, by virtue of scaling (Lemma~\ref{lem:scaling}). We now state our main results. Let $\CR_\lambda(t) = \sup_{x \in \CT_\lambda(t)} |x|$ be the radius of the tree $\CT_\lambda$ at time $t$. Our first main result concerns $\CR_\lambda$.
\begin{theorem}\label{thm:main_result}
Fix $\alpha > 0$. For any $\epsilon > 0$, there exists $\lambda_0 = \lambda_0(\alpha,\epsilon) > 0$ such that for all $\lambda \geq \lambda_0$, we have
\begin{align*}
	\p(\CR_\lambda(1) \leq 1/2 - \epsilon) \leq (1+\alpha) e^{-\epsilon \lambda}.
\end{align*}
Moreover, for any $\lambda > 0$, almost surely, $\CR_\lambda^{\mathrm{LI}} \coloneqq \liminf_{t \to \infty} \CR_\lambda(t)/t \geq 1/2$ and the law of $\CR_\lambda^{\mathrm{LI}}$ does not depend on $\lambda$. 
\end{theorem}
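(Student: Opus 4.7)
The plan is to exploit the scaling property (Lemma~\ref{lem:scaling}), in the form $\CR_\lambda(t) \stackrel{d}{=} t\, \CR_{\lambda t}(1)$ at the process level, to reduce claims (2) and (3) to the quantitative estimate at $t=1$. For claim (3), this identity gives $\liminf_{s \to \infty} \CR_\lambda(s)/s \stackrel{d}{=} \liminf_{t \to \infty} \CR_{\lambda t_0}(t)/t$ for every $t_0 > 0$, so the law of $\CR_\lambda^{\mathrm{LI}}$ does not depend on $\lambda$. For claim (2), I would apply claim (1) along the integer times $n \geq 1$: by scaling, $\p(\CR_\lambda(n) \leq (1/2 - \epsilon) n) = \p(\CR_{\lambda n}(1) \leq 1/2 - \epsilon) \leq (1+\alpha) e^{-\epsilon \lambda n}$ for all large $n$, which is summable. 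Borel--Cantelli then gives $\CR_\lambda(n) > (1/2 - \epsilon) n$ eventually almost surely; the monotonicity of $t \mapsto \CR_\lambda(t)$ (the tree only grows) extends this from integers to all $t$ via $\CR_\lambda(t)/t \geq \CR_\lambda(n)/(n+1)$ for $t \in [n, n+1]$. Taking $\epsilon$ down a countable sequence then yields $\liminf_t \CR_\lambda(t)/t \geq 1/2$ almost surely.

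The bulk of the work is claim (1). My strategy is to count branches of $\CT_\lambda(s)$ that extend straight to time $1$ and land far from $\origin$. Fix $s = 1/2 + \epsilon$. At time $s$ the tree has $N_s$ alive branches, and the underlying Yule process gives $N_s$ geometric with mean $e^{\lambda s}$. For each alive branch $i$, write $q_i \in \R^\FD$ for its starting point (the position of the branching event that created it, or $\origin$ for the root branch), $t_i \leq s$ for its creation time, and $u_i \in S^{\FD - 1}$ for its uniformly-chosen direction. Conditional on the tree up to time $s$, branch $i$ fails to re-branch in $[s, 1]$ with probability $p := e^{-\lambda(1/2 - \epsilon)}$, independently across $i$; in that case its tip at time $1$ lies at $q_i + (1 - t_i) u_i$ with $1 - t_i \geq 1/2 - \epsilon$.

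A short geometric check establishes that, for fixed $(q_i, t_i)$ and $u_i$ uniform, $\p(|q_i + (1 - t_i) u_i| \geq 1/2 - \epsilon) \geq 1/2$: using $(1 - t_i)^2 \geq (1/2 - \epsilon)^2$, the inequality rearranges to $q_i \cdot u_i \geq C$ for some $C \leq 0$, and by symmetry of $u_i$ on the sphere, $\p(q_i \cdot u_i \geq C) \geq 1/2$. The key step is to condition on the $\sigma$-algebra recording the tree up to time $s$ \emph{except} the directions of the currently-alive branches, so that the $q_i$'s and $t_i$'s are measurable but the $u_i$'s remain iid uniform; the per-branch ``extends straight and lands far from $\origin$'' events are then conditionally independent, each of probability at least $p/2$. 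Thus the conditional probability of no success is bounded by $(1 - p/2)^{N_s}$, and taking expectations with the explicit Yule generating function $\E[z^{N_s}] = z e^{-\lambda s}/(1 - z(1 - e^{-\lambda s}))$ at $z = 1 - p/2$ yields, after simplification, $\p(\CR_\lambda(1) \leq 1/2 - \epsilon) \leq 2 e^{-2 \epsilon \lambda}$, which is at most $(1 + \alpha) e^{-\epsilon \lambda}$ once $\lambda \geq \lambda_0(\alpha, \epsilon)$. The main obstacle is the careful setup of the conditioning needed to make the per-branch geometric bound compose into a population-level estimate; once this is in place, the Yule-process generating-function computation is routine.
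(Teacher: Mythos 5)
Your proposal is correct, and its overall architecture matches the paper's, but the way you assemble the key estimate for part (1) is genuinely different and worth comparing. Parts (2) and (3) are exactly the paper's route (Corollaries~\ref{cor:limits_independent_of_intensity} and~\ref{cor:liminf_lower_bound}): process-level scaling gives the $\lambda$-independence of the law of $\CR_\lambda^{\mathrm{LI}}$, and the summable time-$1$ bound along integers plus monotonicity of $t \mapsto \CR_\lambda(t)$ and the interpolation $\CR_\lambda(t)/t \geq \CR_\lambda(n)/(n+1)$ gives the liminf bound. For part (1), both you and the paper (Proposition~\ref{prop:radius_bound}) run the tree to time $1/2+\epsilon$ and exploit that each of the exponentially many leaves shoots straight into the outward half-sphere and survives without branching until time $1$ with conditional probability at least $\tfrac12 e^{-\lambda(1/2-\epsilon)}$; the difference is in the counting. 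The paper proceeds in three stages: a first-success tail bound (Lemma~\ref{lem:asymptotics_Fs}) to get $\CN_\lambda(1/2+\epsilon) \geq e^{\lambda/2}$, Hoeffding's inequality to guarantee a quarter of the leaves point in the good direction, and a maximum-of-exponentials estimate (Lemma~\ref{lem:asymptotics_exponential_maximum}). You instead turn the per-leaf successes into conditionally i.i.d.\ Bernoulli$(\geq p/2)$ variables by conditioning on the genealogy and the directions of already-branched segments (so the alive directions stay i.i.d.\ uniform and independent of the residual clocks --- this $\sigma$-algebra is indeed the right one, since the birth times and birth positions of the alive segments are measurable with respect to it), and then average $(1-p/2)^{\CN_\lambda(s)}$ against the exact geometric generating function. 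This buys you a shorter argument with no appendix lemmas and no Hoeffding, and in fact a sharper exponent: the computation gives $\p(\CR_\lambda(1) \leq 1/2-\epsilon) \leq 2e^{-2\epsilon\lambda}/(1-e^{-\lambda(1/2+\epsilon)})$ (your constant $2$ should carry this harmless extra factor), which is stronger than the stated $(1+\alpha)e^{-\epsilon\lambda}$ for $\lambda \geq \lambda_0(\alpha,\epsilon)$. The paper's version, on the other hand, isolates the intermediate events (many leaves, many good directions, one long interval) in a form that is reused verbatim in Lemma~\ref{lem:zero_one_law_radius} and Proposition~\ref{prop:connection_probability_infinity}, where only a lower bound $p$ on a radius probability is available rather than an explicit per-leaf success probability.
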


For an explicit bound on $\p(\CR_\lambda(1) \leq 1/2 - \epsilon)$ for all $\lambda > 0$ and $\epsilon > 0$, see the end of the proof of Proposition~\ref{prop:radius_bound}.

\begin{remark}
We remark that the model exhibits a $0$-$1$-type behaviour (see Lemma~\ref{lem:zero_one_law_radius}). In particular, if $\p(\CR_\lambda(1) \geq d) \geq p$ holds for some $d > 1/2$ and all $\lambda$ large enough, then $\CR_\lambda^{\mathrm{LI}} \geq d$ almost surely (see Corollary~\ref{cor:liminf_lower_bound}) and the first assertion of Theorem~\ref{thm:main_result} holds with $d$ in place of $1/2$ with a slightly weaker exponent.
\end{remark}

Before stating the next results, we recall some definitions. For $x \in \R^\FD$ and $r>0$, we let $B(x,r) = \{y \in \R^\FD: |x-y| < r \}$ denote the open ball in $\R^\FD$ with centre $x$ and radius $r$ and for a set $A \subset \R^\FD$, we let $B(A,r) = \{ y \in \R^\FD: \dist(A,y) < r \}$, where $\dist$ is the Euclidean distance, that is, $B(A,r)$ is the open $r$-neighbourhood of $A$.  The \emph{Hausdorff distance} is the metric $\dist_H$ on subsets of $\R^\FD$ given by 
\begin{align*}
	\dist_H(A,A') = \inf\{ \epsilon > 0: A \subset B(A',\epsilon) \ \text{and} \ A' \subset B(A,\epsilon) \},
\end{align*}
for $A,A' \subset \R^\FD$. Thus, we say that a sequence $(A_n)$ of random sets converge to a set $A$ in probability with respect to the topology determined by the Hausdorff distance if $\p( \dist_H(A_n,A) > \epsilon) \to 0$ as $n \to \infty$ for each $\epsilon > 0$.

\begin{theorem}\label{thm:main_result_1}
$\CT_\lambda(1) \cap B(0,1/2)$ converges to $B(0,1/2)$ in probability with respect to the topology determined by the Hausdorff distance, as $\lambda \to \infty$. Moreover, for any $\delta > 0$, $C> 2$ and $a \in (0,1)$, there exists $t_* = t_*(\lambda,C,a,\delta) > 0$, such that
\begin{align*}
	\p( \exists x \in B(0,t/2):\dist(\CT_\lambda(t),x) > \delta t) \leq C \delta^{-2\FD} e^{- \tfrac{(1-a)}{16} \lambda \delta t},
\end{align*}
for all $t \geq t_*$.
\end{theorem}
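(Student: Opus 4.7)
The plan is to derive both assertions of the theorem from a single quantitative covering estimate for $\CT_\Lambda(1)$ in the large-intensity regime, using the scaling lemma (Lemma~\ref{lem:scaling}) to translate between the two regimes of the theorem.

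\textbf{Reduction via scaling.} By Lemma~\ref{lem:scaling}, $\CT_\lambda(t) \stackrel{d}{=} t \cdot \CT_{\lambda t}(1)$, so the event in the second assertion at intensity $\lambda$ and time $t$ corresponds, after dividing by $t$, to $\{\exists x \in B(0,1/2): \dist(\CT_\Lambda(1),x) > \delta\}$ at intensity $\Lambda = \lambda t$ and time $1$. It thus suffices to show that, for each $\delta>0$, $C>2$, and $a \in (0,1)$, there exists $\Lambda_\star = \Lambda_\star(\delta, C, a) > 0$ such that for all $\Lambda \geq \Lambda_\star$,
\begin{align*}
    \p\bigl(\exists x \in B(0,1/2): \dist(\CT_\Lambda(1),x) > \delta\bigr) \leq C \delta^{-2\FD} e^{-(1-a)\Lambda\delta/16}.
\end{align*}
The Hausdorff convergence in the first assertion is then immediate, since $\CT_\Lambda(1) \cap B(0,1/2) \subset B(0,1/2)$ automatically gives one side of the Hausdorff inequality, and the other follows from the quantitative bound (together with a small shrinkage of the ball to push approximating tree points inside $B(0,1/2)$).

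\textbf{Covering and reduction to a single target.} I would fix a minimal $(\delta/2)$-net $\{y_1,\ldots,y_N\} \subset \ol{B(0,1/2)}$, so that $N \leq C_1 \delta^{-\FD}$ by a volume argument. If $\dist(\CT_\Lambda(1), y_i) \leq \delta/2$ for every $i$, then $\dist(\CT_\Lambda(1),x) \leq \delta$ for every $x \in \ol{B(0,1/2)}$ by the triangle inequality. A union bound then reduces the task to proving, for each fixed $y \in \ol{B(0,1/2)}$,
\begin{align*}
    \p\bigl(\dist(\CT_\Lambda(1), y) > \delta/2\bigr) \leq C_2\, \delta^{-\FD} e^{-(1-a)\Lambda\delta/16}. \qquad(\star)
\end{align*}
The remaining $\delta^{-\FD}$ factor (combining with the covering to produce the $\delta^{-2\FD}$ prefactor) is expected to emerge from a secondary angular covering in the proof of $(\star)$.

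\textbf{Path construction for $(\star)$ and the main obstacle.} I would establish $(\star)$ by inductively building a chain of tree points $q_0 = \origin, q_1, \ldots, q_K$ that tracks the segment $[\origin, y]$: set $K = \lceil 2|y|/\delta \rceil = O(1/\delta)$ and $p_k = (k/K) y$, and aim for $|q_k - p_k| \leq \delta/8$ at each step. Given $q_{k-1}$, the subtree of $\CT_\Lambda(1)$ rooted at $q_{k-1}$ is by the Markov property of the Yule branching a rate-$\Lambda$ Yule tree on the remaining time; one runs it for a short sub-interval of length $s \sim \delta/4$, during which (for large $\Lambda$) exponentially many independent branches are produced with uniformly distributed independent directions. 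The main obstacle is that Theorem~\ref{thm:main_result} only guarantees reach in \emph{some} direction, whereas here we need reach toward a \emph{specific} target $p_k$. My plan to resolve this is to combine the angular uniformity of the initial branch directions with Theorem~\ref{thm:main_result} applied to each such branch: concentration of the number of branches whose direction lies in the spherical cap of opening $\sim \delta$ about $(p_k - q_{k-1})/|p_k - q_{k-1}|$, together with Theorem~\ref{thm:main_result} applied to the subtree of such a branch, yields $q_k$ within $\delta/8$ of $p_k$ with failure probability of order $e^{-(1-a)\Lambda\delta/16}$; the $(1-a)$ and $1/16$ are the price of absorbing the $(1+\alpha)$-type losses from Theorem~\ref{thm:main_result} across the $K$ iterations. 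A union bound over the $K = O(1/\delta)$ steps of the chain and the $O(\delta^{-(\FD-1)})$ caps required to cover $S^{\FD-1}$ then produces the $\delta^{-\FD}$ factor in $(\star)$, completing the plan.
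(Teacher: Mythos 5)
There is a genuine gap at the heart of your single-point estimate $(\star)$: the step in which "concentration of the number of branches whose direction lies in the spherical cap about $(p_k-q_{k-1})/|p_k-q_{k-1}|$, together with Theorem~\ref{thm:main_result} applied to the subtree of such a branch, yields $q_k$ within $\delta/8$ of $p_k$" does not follow from the stated ingredients. Theorem~\ref{thm:main_result} guarantees that a subtree grown for time $s$ has radius close to $s/2$ in \emph{some} direction, but gives no control over \emph{which} direction, and the initial direction of the branch carrying that subtree is forgotten after time of order $1/\lambda$ (its first segment has length $\sim 1/\lambda$; cf.\ Lemma~\ref{lem:branch_to_zero}), so conditioning the initial direction to lie in a cap toward $p_k$ says essentially nothing about where the subtree attains its maximal displacement. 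The missing mechanism — and the one the paper uses in Proposition~\ref{prop:connection_probability_infinity} — is rotational invariance of an (unconditioned) descending tree: on the event $\{\CR \geq |y|\}$ the direction in which the radius is attained is uniform, so a fixed ball $B(x,r)$ is hit with probability at least of order $(r/|y|)^{\FD-1}\,\p(\CR \geq |y|)$; one then seeds exponentially many conditionally independent descending trees in an initial time window of length $\epsilon/4$ (each still having nearly the full unit of time), so the probability that all of them miss $B(x,r)$ is doubly exponentially small and the dominant error is the probability of having too few leaves, $\approx e^{-(1-a)\lambda\delta/16}$. In particular no chain along $[\origin,y]$ is needed; the paper's proof is a one-step union bound over a $(\delta/2)$-grid (Proposition~\ref{prop:convergence_time_1}, with the crude count of grid points producing the $\delta^{-2\FD}$ prefactor), followed by scaling (Lemma~\ref{lem:holes_in_tree_time_t}) and Proposition~\ref{prop:radius_bound} to supply the hypothesis for every $d<1/2$.

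Even setting the directional issue aside, the arithmetic of your chain cannot work as stated: the waypoints are spaced $|y|/K\approx\delta/2$ apart while each step is allotted time $s\sim\delta/4$, so maintaining $|q_k-p_k|\le\delta/8$ would require the tree to advance toward the target at speed about $2$, whereas it grows at speed $1$ and Theorem~\ref{thm:main_result} only certifies effective speed about $1/2$. Repairing this forces step times roughly twice the spacing plus slack, and then for targets with $|y|$ near $1/2$ the total time over the $K=O(1/\delta)$ steps saturates the unit budget, so the targets must be taken in a slightly shrunk ball (as the paper does via $X_\delta\subset B(0,d-\delta/4)$ with $d<1/2$) and the boundary layer handled separately. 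These issues, together with the unproved claim that the per-step failure probability is $e^{-(1-a)\Lambda\delta/16}$, mean the proposal as written does not establish the theorem, although its scaling reduction and net-plus-union-bound skeleton do coincide with the paper's.
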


We expect there to be some $d \in [1/2,1]$ (which might depend on the dimension $\FD$) such that $\CT_\lambda(1) \to B(0,d)$ in probability with respect to the topology determined by the Hausdorff distance, as $\lambda \to \infty$. See the discussion in the beginning of Section~\ref{sec:connection_infinity}. Furthermore, we expect this number $d$ to be the almost sure limit of $\CR_\lambda(t)/t$ as $t \to \infty$. 

The outline of the paper is as follows. We begin in Section~\ref{sec:model} by introducing the model, revisiting previous results, and discussing seemingly similar models. In Section~\ref{sec:branch} we prove a convergence result for the branches of the model. Then, in Section~\ref{sec:geometry}, we begin by proving the scaling rule for $\CT_\lambda$, before moving on to the radius bounds in Section~\ref{sec:radius} and the convergence results in Section~\ref{sec:connection}. Lastly, in Appendix~\ref{app:asymptotics} we prove the asymptotics of first success and exponential distributions used in Section~\ref{sec:radius}.

\subsection*{Notation and conventions}
We denote by $\R$ the real numbers and $\N$ the set of non-negative integers, $\N^+$ the positive integers and $\CS^n = \{ x \in \R^{n+1}: | x | = 1\}$ the unit $n$-sphere. For a set $D \subset \R^\FD$ and a positive number $a > 0$, we write $D/a = \{x \in \R^\FD: a x \in D \}$. Furthermore, for $x \in \R^\FD$ and $r > 0$, we let $B(x,r)$ denote the open ball of radius $r$. We let $\origin = \{(0,\dots,0)\} \subset \R^\FD$.

We denote by $\Exp(\lambda)$ the exponential distribution parametrised by rate, that is, such that if $Y \sim \Exp(\lambda)$, then $\E[Y] = 1/\lambda$.

\subsection*{Acknowledgements}
The author was supported by the Finnish Academy Centre of Excellence FiRST and the ERC starting grant 804166 (SPRS). We thank Tatyana Turova and Vasilii Goriachkin for interesting discussions and we are grateful to Turova for introducing us to the model and an anonymous referee for helpful comments on the paper. Finally, we express our gratitude to Frida Fejne for simulating the random tree.

\section{The model $\CT_\lambda(t)$: a random tree in Euclidean space}\label{sec:model}
Fix $\FD \in \N^+$ and for $x \in \R^\FD$, let $\CT_\lambda^x(t) = \CT_\lambda^{x,\FD}(t)$ be a random set defined as follows.
Let $\CT_\lambda^x(0) = x$. Draw a vector $\nu \in \CS^{\FD-1}$ uniformly at random and an exponential random variable $\tau$ with expectation $1/\lambda$, independently of $\nu$, and for $t < \tau$ let $\CT_\lambda^x(t) = \{x + s\nu\}_{s \in [0,t]}$, i.e., for $t < \tau$ it is an interval of length $t$ drawn in the direction $\nu$. The end of the interval is called an active end, or a leaf. At time $\tau$, two new, independent, intervals start to grow from the leaf in two new, independently and uniformly randomly drawn directions, and what was the leaf is now no longer growing, hence is for time $t > \tau$ referred to as a \emph{branching point}. Both of the new intervals branch independently in the same manner as the first interval, with intensity $\lambda$. We write $\CT_\lambda(t) = \CT_\lambda^0(t)$. We denote by $\CS_\lambda(t)$ the set of branching points of $\CT_\lambda(t)$.

While, in the case $\FD \leq 2$, this process is self-intersecting and hence does form loops, we still refer to it as a tree. This is because if we define the graph $\CG_\lambda(t) = (\CV_\lambda(t),\CE_\lambda(t))$, to be such that $\CV_\lambda(t)$ is the union of $\CS_\lambda(t)$ and the leaves of $\CT_\lambda(t)$ and $\{v_1,v_2\} \in \CE_\lambda(t)$ if $v_1,v_2 \in \CV_\lambda(t)$ and there is an interval in $\CT_\lambda(t)$ connecting $v_1$ and $v_2$, then $\CG_\lambda(t)$ is a tree.

Every path $\gamma:[0,t] \to \CT_\lambda(t)$, starting at $\CT_\lambda(0)$ and ending at some leaf of $\CT_\lambda(t)$, travelling at unit speed and leaving an interval in $\CT_\lambda(t)$ only at the point where it branches into two new intervals, is called a \emph{branch} of $\CT_\lambda(t)$. Note here that almost surely, no two branching points coincide, so for each time $t > 0$ and each leaf $v$ of $\CT_\lambda(t)$ there is a unique branch of $\CT_\lambda(t)$ starting at $0$ and ending at $v$. Moreover, each branch of $\CT_\lambda(t)$ has length $t$. In (the equivalent) Definitions~\ref{def:branch1} and~\ref{def:branch2} we define a process $(\CB_\lambda(t))_{t \geq 0}$ which has the law of a branch of $\CT_\lambda(t)$ and we henceforth denote by $\CB_\lambda$ a branch of $\CT_\lambda$. Moreover, we denote by $\CB_\lambda([s_1,s_2])$ the set of points traced out by $\CB_\lambda$ in the time interval $[s_1,s_2]$.

We say that a point $y \in \CT_\lambda(t)$ is a \emph{descendant} of a point $x \in \CT_\lambda(t)$ if there is a branch $\CB_\lambda$ of $\CT_\lambda(t)$ and times $t_1<t_2 \leq t$ such that $x = \CB_\lambda(t_1)$ and $y = \CB_\lambda(t_2)$. We say that $x \in \CT_\lambda(t)$ is an \emph{ancestor} of $y \in \CT_\lambda(t)$ if $y$ is a descendant of $x$. The \emph{descending tree} at time $t$ of a point $x \in \CT_\lambda(t)$ is the set of descendants $y \in \CT_\lambda(t)$ of $x$. A \emph{double point} $\CT_\lambda(t)$ is a point $x \in \CT_\lambda(t)$ for which there are two branches $\CB_\lambda^1$, $\CB_\lambda^2$ of $\CT_\lambda(t)$ and times $0 \leq t_1,t_2 \leq t$ such that $x = \CB_\lambda^1(t_1) = \CB_\lambda^2(t_2)$ and $\CB_\lambda^1([0,t_1]) \neq \CB_\lambda^2([0,t_2])$. We let $\CD_\lambda(t)$ denote the set of double points of $\CT_\lambda(t)$. Note that this definition allows for $\CB_\lambda^1$ and $\CB_\lambda^2$ to be the same, provided that $x$ is hit by the branch at two different points in time. If this is the case, we say that $x$ is \emph{hit twice by the same branch}. When we refer to a point $x \in \CD_\lambda(t)$ \emph{hit by two different branches}, we mean a double point which is not hit twice by the same branch (even though most points of $\CT_\lambda(t)$ will actually be hit by more than one branch, in the sense that there are several branches that hit the same points before they branch). Note that the set of triple points of $\CT_\lambda(t)$ (that is, the set of points $x \in \CT_\lambda(t)$ such that there exist branches $\CB_\lambda^1$, $\CB_\lambda^2$ and $\CB_\lambda^3$ of $\CT_\lambda$ and times $0 \leq t_1, t_2, t_3 \leq t$, such that $x = \CB_\lambda^1(t_1) = \CB_\lambda^2(t_2) = \CB_\lambda^3(t_3)$ and such that $\CB_\lambda^i([0,t_i]) \neq \CB_\lambda^j([0,t_j])$ whenever $i \neq j$) is almost surely empty. Moreover, $\CS_\lambda(t) \cap \CD_\lambda(t) = \emptyset$ almost surely. Note that there are four different cases for the law of the descending tree of a point $x \in \CT_\lambda(t)$.
\begin{enumerate}[(1)]
	\item\label{it:dt1} The descending tree of a point $x \in \CS_\lambda(t)$ hit at time $s$ has the law of the union of two independently sampled trees with law $\CT_\lambda^x(t-s)$.  We refer to those two trees as the descending trees of $x$ and it will be clear to the reader what is meant.
	\item\label{it:dt2} The descending tree of a point $x \in \CT_\lambda(t) \setminus (\CD_\lambda(t) \cup \CS_\lambda(t))$ hit at time $s$ has the law of $\CT_\lambda^x(t-s)$, conditioned to start growing in the direction $\nu$ in which the branch hitting $x$ was growing at time $s$.
	\item\label{it:dt3} A point $x \in \CD_\lambda(t)$ that is hit by two different branches at times $s_1$ and $s_2$ have two descending trees, with the laws of $\CT_\lambda^x(t-s_1)$ and $\CT_\lambda^x(t-s_2)$, conditioned to start growing in the directions that their respective branches grew when hitting $x$.
	\item\label{it:dt4} A point $x \in \CD_\lambda(t)$ that is hit twice by the same branch, at times $s_1<s_2$, has two descending trees, with the laws of $\CT_\lambda^x(t-s_1)$ and $\CT_\lambda^x(t-s_2)$ conditioned to start growing in the directions that the branch hitting $x$ grew at times $s_1$ and $s_2$, respectively, and the latter is a subset of the former.
\end{enumerate}
\begin{remark}\label{rmk:independence_descending_trees}
We emphasise here that in case~\eqref{it:dt1}, the two descending trees are conditionally independent of $\CT_\lambda(s-)$ given $x$ and similarly in the cases~\eqref{it:dt2}--\eqref{it:dt4} the descending trees are conditionally independent the past given $x$ and their initial angles of growth. This independence is key in deriving bounds for the radius of the tree as well as functional equations for connection probabilities and expectations of densities, see Section~\ref{sec:connection}.
\end{remark}

\begin{figure}[ht!]
\centering
\includegraphics[width=120mm]{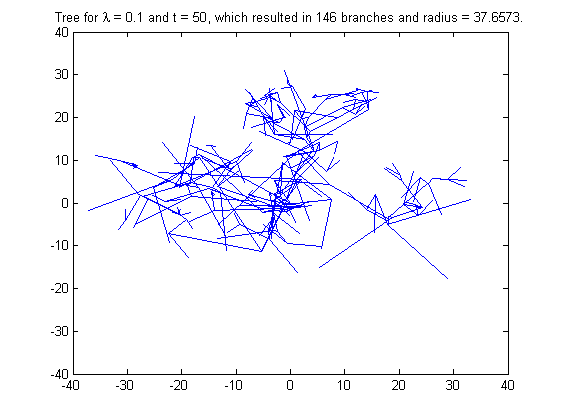}
\caption{Simulation of $\CT_\lambda(t)$ for $\FD = 2$ with $t = 50$ and $\lambda = 0.1$. \label{fig:t50l01}}
\end{figure}

\subsection{Previous results}
In this subsection we recall the previous mathematical results on the process $\CT_\lambda$. Let $\CN_\lambda(t)$ be the number of leaves of $\CT_\lambda(t)$ for $t \geq 0$ (with the understanding that $\CN_\lambda(0) = 1$).  We note that $\CN_\lambda(t)$ is a Yule process with rate $\lambda$, that is, a birth process with individual birth rate $\lambda$. Thus, the following holds, see Exercise~2.5.1 of \cite{norris1997book} or Proposition~6 of \cite{schoug2015thesis}. 
\begin{proposition}\label{prop:number_of_leaves}
$\CN_\lambda(t) \sim \Fs(e^{-\lambda t})$, i.e.,
\begin{align*}
\p( \CN_\lambda(t) = k ) = e^{-\lambda t} (1 - e^{-\lambda t})^{k-1},
\end{align*}
for $k=1,2,\dots$
\end{proposition}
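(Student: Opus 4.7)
The plan is to identify $\CN_\lambda(t)$ as a pure birth (Yule) process with individual rate $\lambda$, then solve the Kolmogorov forward equations by induction on $k$. Since the construction of $\CT_\lambda$ makes each of the current leaves evolve independently with $\Exp(\lambda)$ branching times, the memoryless property implies that when $\CN_\lambda(t) = k$ the remaining time until the next branching event is the minimum of $k$ i.i.d.\ $\Exp(\lambda)$ variables, hence $\Exp(k\lambda)$, and at that event the leaf count jumps from $k$ to $k+1$. Thus $(\CN_\lambda(t))_{t \geq 0}$ is a continuous-time Markov chain on $\N^+$ with transition rates $q_{k,k+1} = k\lambda$.

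Writing $p_k(t) = \p(\CN_\lambda(t) = k)$, the forward equations read $p_1'(t) = -\lambda p_1(t)$ with $p_1(0) = 1$, and
\begin{align*}
	p_k'(t) = (k-1)\lambda\, p_{k-1}(t) - k\lambda\, p_k(t), \qquad p_k(0) = 0, \quad k \geq 2.
\end{align*}
The base case $k=1$ integrates immediately to $p_1(t) = e^{-\lambda t}$, matching the conjectured formula. For the inductive step, I would assume $p_{k-1}(t) = e^{-\lambda t}(1-e^{-\lambda t})^{k-2}$, multiply the ODE for $p_k$ by the integrating factor $e^{k\lambda t}$, and substitute the inductive hypothesis to obtain
\begin{align*}
	\tfrac{d}{dt}\bigl[ e^{k\lambda t} p_k(t) \bigr] = (k-1)\lambda \, e^{(k-1)\lambda t}(1-e^{-\lambda t})^{k-2}.
\end{align*}
The substitution $u = 1 - e^{-\lambda t}$ (so that $du = \lambda e^{-\lambda t}\,dt$ and $e^{(k-1)\lambda t} = e^{\lambda t} \cdot e^{(k-2)\lambda t}$) reduces the right-hand side to a polynomial in $u$, and integrating from $0$ to $t$ using $p_k(0)=0$ yields $p_k(t) = e^{-\lambda t}(1-e^{-\lambda t})^{k-1}$, closing the induction.

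The argument contains no real obstacles, as it is a standard computation for the Yule process; the only point requiring care is checking that the integration of the linear ODE correctly produces the factor $(1-e^{-\lambda t})^{k-1}$ and vanishes at $t=0$, which is routine once the substitution is made. As an alternative route, one could instead exploit the branching recursion $\CN_\lambda(t) \stackrel{d}{=} \mathbf{1}_{\{\tau > t\}} + \mathbf{1}_{\{\tau \leq t\}}(\wt\CN_\lambda^{(1)}(t-\tau) + \wt\CN_\lambda^{(2)}(t-\tau))$, where $\tau \sim \Exp(\lambda)$ and the two copies are independent, and verify that the probability generating function $G(z,t) = e^{-\lambda t} z / (1 - (1-e^{-\lambda t})z)$ of the first success distribution satisfies the resulting integral equation; this is slightly slicker but amounts to the same calculation.
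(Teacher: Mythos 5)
Your proof is correct and is exactly the standard Yule-process computation that the paper itself does not spell out but delegates to its references (Exercise~2.5.1 of \cite{norris1997book}, Proposition~6 of \cite{schoug2015thesis}): identify $\CN_\lambda(t)$ as a pure birth chain with rates $q_{k,k+1}=k\lambda$ via the memoryless property, then solve the forward equations inductively. One cosmetic remark: with $u=1-e^{-\lambda t}$ the integrand is a rational function rather than a polynomial; substituting $v=e^{\lambda t}-1$ instead gives literally $\int (k-1)v^{k-2}\,dv=(e^{\lambda t}-1)^{k-1}$ and hence $p_k(t)=e^{-\lambda t}(1-e^{-\lambda t})^{k-1}$, but either way the integration closes the induction as you claim.
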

Thus, $\E[\CN_\lambda(t)] = e^{\lambda t}$ and $\CN_\lambda(t)$ typically grows exponentially. This is a key property which drastically changes several geometric features, compared to if the growth had been slower. Indeed, this makes sure that very rare events for a branch actually occurs all the time for the tree, changing, for example, the radius of the tree significantly, as we will see in Section~\ref{sec:radius}.

Denote the total length of every interval in $\CT_\lambda(t)$ by $\CL_\lambda(t)$. The following was proved in Section~A.1 of \cite{antz2015network}.
\begin{lemma}\label{lem:length}
The moment generating function $\psi_{\CL_\lambda(t)}$ of $\CL_\lambda(t)$ is given by
\begin{align*}
\psi_{\CL_\lambda(t)}(x) = \frac{x-\lambda}{x e^{(\lambda - x)t}-\lambda}.
\end{align*}
In particular,
\begin{align*}
\E[ \CL_\lambda(t)] = \frac{e^{\lambda t} - 1}{\lambda}.
\end{align*}
\end{lemma}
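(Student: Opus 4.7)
The plan is to reduce the problem to an ODE for $v(t)=\psi_{\CL_\lambda(t)}(x)$. The starting observation is that
$$
\CL_\lambda(t) = \int_0^t \CN_\lambda(s)\,ds,
$$
since at each time $s$ the total length of $\CT_\lambda$ is growing at unit rate per leaf, and there are $\CN_\lambda(s)$ such leaves. Combining this identity with Fubini and Proposition~\ref{prop:number_of_leaves} (which gives $\E[\CN_\lambda(s)]=e^{\lambda s}$) produces the stated expectation $(e^{\lambda t}-1)/\lambda$ immediately, without any further work.

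For the MGF, I would exploit the branching structure. Condition on the first branching time $\tau\sim\Exp(\lambda)$. On $\{\tau>t\}$ the tree is a single interval of length $t$, so $\CL_\lambda(t)=t$; on $\{\tau\le t\}$ the two subtrees are conditionally independent copies of $\CT_\lambda(t-\tau)$, so
$$
\CL_\lambda(t)=\tau+\CL_\lambda^{(1)}(t-\tau)+\CL_\lambda^{(2)}(t-\tau).
$$
Taking expectations of $e^{x\CL_\lambda(t)}$ then produces the renewal equation
$$
v(t) = e^{(x-\lambda)t}+\int_0^t \lambda e^{(x-\lambda)s}\,v(t-s)^2\,ds.
$$
After the change of variables $u=t-s$ and factoring out $e^{(x-\lambda)t}$, differentiating in $t$ converts this to the Riccati equation
$$
v'(t)=(x-\lambda)\,v(t)+\lambda\,v(t)^2,\qquad v(0)=1.
$$

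This ODE is separable: writing $dv/[v((x-\lambda)+\lambda v)]=dt$, applying partial fractions, and using $v(0)=1$ leads after a short computation to
$$
\frac{v(t)}{(x-\lambda)+\lambda v(t)}=\frac{e^{(x-\lambda)t}}{x},
$$
which rearranges to the claimed closed form.

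The step I expect to require the most care is justifying finiteness and smoothness of $v(t)$ in a neighbourhood of $x=0$, so that all of the above calculus is legitimate. For this I would rely on the crude deterministic bound $\CL_\lambda(t)\le t\,\CN_\lambda(t)$ (which follows from the integral representation above and monotonicity of $\CN_\lambda$), together with the geometric tail of $\CN_\lambda(t)$ provided by Proposition~\ref{prop:number_of_leaves}, to produce exponential moments of $\CL_\lambda(t)$ for all small enough $x$; dominated convergence then justifies both the exchange of $\E$ with $\partial_t$ and the derivation of the renewal equation.
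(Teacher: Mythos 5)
Your proposal is correct. Note that the paper itself contains no proof of this lemma --- it is quoted from Section~A.1 of the cited work --- and your argument is the standard derivation of precisely this type: condition on the first branching time $\tau\sim\Exp(\lambda)$, use the conditional independence of the two descending subtrees to get the renewal equation $v(t)=e^{(x-\lambda)t}+\int_0^t\lambda e^{(x-\lambda)s}v(t-s)^2\,ds$, and convert it to the Riccati equation $v'=(x-\lambda)v+\lambda v^2$, $v(0)=1$, whose solution is the stated closed form. Your identity $\CL_\lambda(t)=\int_0^t\CN_\lambda(s)\,ds$ gives the mean directly from $\E[\CN_\lambda(s)]=e^{\lambda s}$, which is cleaner than differentiating the MGF, and the bound $\CL_\lambda(t)\le t\,\CN_\lambda(t)$ together with the geometric law of $\CN_\lambda(t)$ does give finite exponential moments for all sufficiently small $x>0$ (and trivially for $x\le 0$), so the interchanges you invoke are legitimate on a neighbourhood of $0$.

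Two small points of care in the last step. First, the separation-of-variables computation divides by $v\bigl((x-\lambda)+\lambda v\bigr)$, and at $x=0$ the second factor vanishes identically (there $v\equiv 1$); this degenerate case, like $x=\lambda$, should be disposed of separately (both are trivial, or follow by continuity). Second, for general small $x$ the sign of $(x-\lambda)+\lambda v$ is not obvious, so it is cleanest either to use the Bernoulli substitution $w=1/v$, which turns the equation into the linear ODE $w'+(x-\lambda)w=-\lambda$ with no such issue, or simply to check that the claimed closed form satisfies the Riccati equation with $v(0)=1$ and invoke uniqueness (the right-hand side is locally Lipschitz). Neither point is a genuine gap; they are routine repairs of the computation you outline.
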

Denote by $\CA_r^\lambda(t) = \Area(\{x \in \R^2: \dist(x,\CT_\lambda(t)) \leq r \})$. It is easy to see that
\begin{align*}
	\pi r^2 \leq \CA_r^\lambda(t) \leq 2r \CL_\lambda(t) + \frac{\pi r^2}{2} ( \CN_\lambda(t) + 1).
\end{align*}
We will see that the probability that $\pi / 4 \leq \CA_r^\lambda(1) \leq \pi(1+r)^2$ converges exponentially fast to $1$ as $\lambda \to \infty$, for any value of $r > 0$. Also, a functional equation for the function $q_r^\lambda(t,x) = \p( \dist(\CT_\lambda(t),x) > r)$ was derived in Section~3.2 of \cite{antz2015network}, but this will be stated in Section~\ref{sec:connection}.

\subsection{Differences in the model compared to previous work}
While the model is mostly the same, there are some key differences compared to the works \cite{antz2015network}, \cite{acdt2019trees} and \cite{gt2019decay}. One obvious restriction in this note is that we consider the growth of only one tree, whereas the main goal of the papers \cite{antz2015network}, \cite{acdt2019trees} and \cite{gt2019decay} is to study the network composed of many trees. A restriction that is in a way imposed in said papers is that the network of trees lives in a bounded region and growth of branches is killed off upon reaching the boundary of said region. We stress, however, that some properties are also derived without considering a bounded region and when only considering one tree. Finally, we mention that in \cite{goriachkin2023thesis}, the author studies the model when leaves stop growing at random.

Another difficulty that arises in the perspective of the above mentioned papers is the fitting of parameters to approximate the properties of actual neuronal networks. This is an aspect that we will not consider in this paper.

\subsection{Comparison to potentially similar branching processes}
There are other branching processes which might seem similar at first glance. Branching Brownian motion is naturally the one of first that come to mind and it has been studied extensively, see e.g.\ \cite{bramson1978displacement}, \cite{roberts2015bbmconsistent}, \cite{mallein2015displacement} and \cite{abbs2013bbmtip}. However, it satisfies very different large scale behaviour. Both of the processes typically have a radius of order $t$ (recall Theorem~\ref{thm:main_result} and see Theorem~1.1 of \cite{mallein2015displacement}), however, deducing a scaling rule for branching Brownian motion shows that this is not the case. Indeed, if $(\BBM_\lambda(t))_{t \geq 0}$ denotes a branching Brownian motion, where the branching rate is $\lambda$ (so that ordinary branching Brownian motion is $\BBM_1$), then we have by Brownian scaling and scaling of exponential random variables, that $\sqrt{\lambda} \BBM_\lambda(t)$ has the same law as $\BBM_1(\lambda t)$. Then, by Theorem~1.1 of \cite{mallein2015displacement}, the radius of $\BBM_\lambda(t)$ has order $\sqrt{2 \lambda} t$ (plus a logarithmic term), so that increasing $\lambda$ pushes the maximal displacement of branching Brownian motion further from the origin. In the model $\CT_\lambda(t)$, the same does not hold true, as $\CT_\lambda(t) \subset \ol{B(0,t)}$ for any $\lambda \geq 0$.

One might also consider branching random walks, for which one has good control of the speed of growth (or in fact \emph{speeds} of growth, see e.g.\ \cite{gantert2000branching}). However, typically when studying branching random walks, one is concerned with what happens at a certain step, whereas in the present model, the number of steps taken at a finite time $t > 0$ depends on the length of the steps, adding extra difficulty which to our best knowledge is not considered before in this context. 

For more on branching random walks and branching Brownian motions, see \cite{shi2012book} and \cite{zeitouni2016notes}.

\section{Branches and their approximate projections}\label{sec:branch}
We now give two equivalent definitions of a stochastic process which describes the law of a branch of $\CT_\lambda(t)$, $t \geq 0$. We denote by $\Unif(\CS^{\FD-1})$ the uniform distribution on $\CS^{\FD-1}$. In what follows, we consider a branch $(\CB_\lambda(t))_{t \geq 0}$ to be a stochastic process (in $\R^\FD$) and write $\CB_\lambda([s,t])$ to be the set of points traced out by $\CB_\lambda$ in the time interval $[s,t]$ for $0 \leq s < t$. 
\begin{definition}\label{def:branch1}
Let $t_i \in \Exp(\lambda)$ and $\nu_i \in \Unif(\CS^{\FD-1})$ for $i \in \N$. We let $(\CB_\lambda(t))_{t \geq 0}$ be the stochastic process
\begin{align*}
&\CB_\lambda(t) := (\CB_\lambda^1(t),\dots,\CB_\lambda^\FD(t)) = \sum_{i=0}^{N_{\CB}(t)} t_i \nu_i + \left(t-\sum_{i=0}^{N_{\CB}(t)} t_i \right) \nu_{N_\CB(t)+1},
\end{align*}
where $N_\CB(t) := \min \{ j \geq 0: \sum_{i=0}^j t_i \leq t < \sum_{i=0}^{j+1} t_i \}$.
\end{definition}
The second definition is the following.
\begin{definition}\label{def:branch2}
Let $N_\CB(t)$ be a Poisson process with intensity $\lambda$. Define the stochastic process $(\CB_\lambda(t))_{t \geq 0}$ as
\begin{align*}
\CB_\lambda(t) := (\CB_\lambda^1(t),\dots,\CB_\lambda^\FD(t)) = \sum_{i=0}^{N_\CB(t)} t_i \nu_i,
\end{align*}
where, conditionally on $N_\CB(t)=n$, $t_0$ is the time of the first jump of $N_\CB(t)$ and $t_i$ is the time between the $i$th and the $i+1$st jump of $N_\CB(t)$, for $i \in \{1,...,n-1\}$ and $t_n = t - \sum_{i=0}^{n-1} t_i$.
\end{definition}
\begin{figure}[h!]
	\centering
		\includegraphics[width=0.49\textwidth]{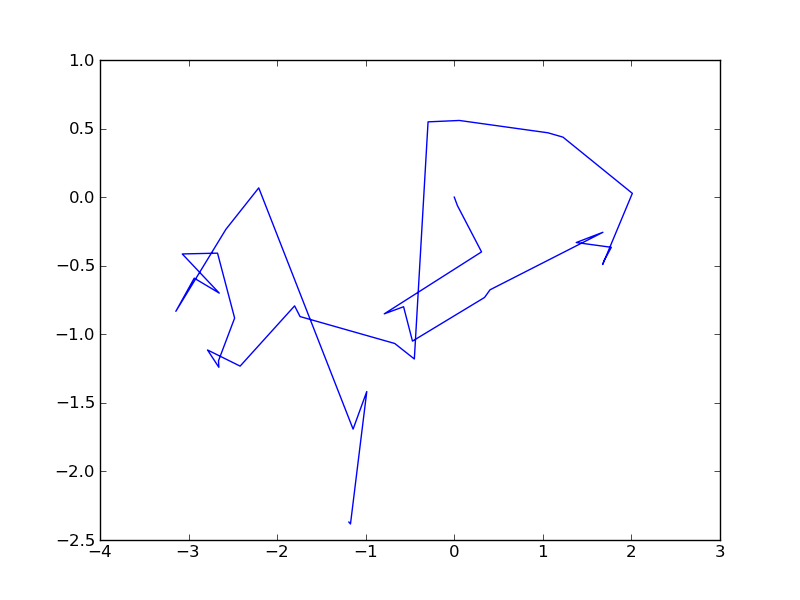} \hspace{0.01\textwidth}\includegraphics[width=0.49\textwidth]{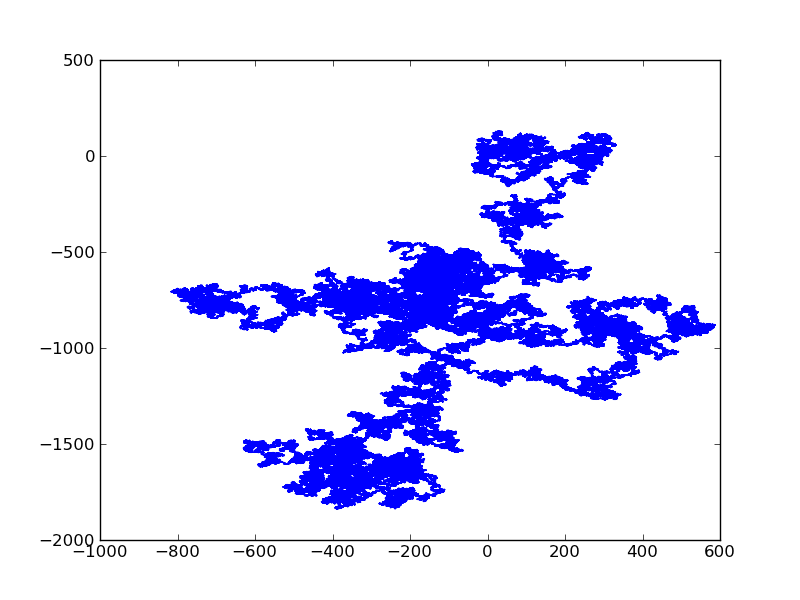}
		\caption{{\bf Left:}  Simulation of $\CB_\lambda(t)$ for $\FD = 2$, with $t = 20$ and $\lambda = 2$. {\bf Right:} Simulation of $\CB_\lambda(t)$ for $\FD = 2$, with $t = 2 \times 10^6$ and $\lambda = 2$.}
		\label{fig:B_t20_l2}
\end{figure}

However, studying the sums $\sum_{i=0}^{N_\CB(t)} t_i \nu_i$ in the above definitions is not straightforward at all, as the number of summands depends on the magnitude of the $t_i$'s. In studying a branch of $\CT_\lambda(t)$, we consider the simpler process, $(X_k)_{k \in \N} = (X_k^1,\dots,X_k^\FD)_{k \in \N}$, defined by
\begin{align*}
X_k^n \coloneqq \sum_{i=0}^k t_i \nu_i^n,
\end{align*}
where $(t_i)$ and $(\nu_i)$ are independent sequences such that $t_i \in \Exp(\lambda)$ and $\nu_i = (\nu_i^1,\dots,\nu_i^\FD) \sim \Unif(\CS^{\FD-1})$ for all integers $i \geq 1$ and the $t_i$ (resp.\ $\nu_i$) is independent of $t_j$ (resp.\ $\nu_j$) whenever $j \neq i$. Let $\F_k = \sigma( t_j, \nu_j: 1 \leq j\leq k)$. For each $n =1,\dots,\FD$, the process $(X_k^n)_{k \in \N}$ will behave approximately as the projection of a branch onto the $n$th coordinate axis. We will begin by stating some basic properties which are straightforward to check.
\begin{lemma}\label{lem:projections}
We have that
\begin{enumerate}[(i)]
  \item $\E[X_k^n] = 0$,
  \item $\var [X_k^n] \leq \frac{2(k+1)}{\lambda^2}$,
  \item $X_k^m$ and $X_k^n$ are uncorrelated for $m \neq n$,
  \item $X_k^n$ is a martingale with respect to the filtration $(\F_k)_{k \geq 1}$.
\end{enumerate}
\end{lemma}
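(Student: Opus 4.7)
The plan is to reduce all four assertions to two elementary inputs. The first is the mutual independence of the pairs $(t_i, \nu_i)$ for $i \geq 0$, together with the independence of $t_i$ and $\nu_i$ within each pair. The second is the pair of moment identities for $\nu_i \sim \Unif(\CS^{\FD-1})$:
\begin{align*}
    \E[\nu_i^n] = 0 \qquad \text{and} \qquad \E[\nu_i^m \nu_i^n] = \frac{\delta_{mn}}{\FD}, \quad 1 \leq m, n \leq \FD.
\end{align*}
The first follows from invariance of $\Unif(\CS^{\FD-1})$ under the reflection $\nu^n \mapsto -\nu^n$; the second from invariance under coordinate permutations combined with $\sum_{n=1}^\FD \E[(\nu_i^n)^2] = \E[|\nu_i|^2] = 1$. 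I will also use the exponential moments $\E[t_i] = 1/\lambda$ and $\E[t_i^2] = 2/\lambda^2$.

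Claim (i) is then immediate from linearity and independence, since $\E[X_k^n] = \sum_{i=0}^k \E[t_i]\E[\nu_i^n] = 0$. For (ii), the summands $t_i \nu_i^n$ are independent across $i$ and each has mean zero, so variances add, giving
\begin{align*}
    \var[X_k^n] = \sum_{i=0}^k \E[t_i^2]\,\E[(\nu_i^n)^2] = \frac{2(k+1)}{\lambda^2 \FD} \leq \frac{2(k+1)}{\lambda^2},
\end{align*}
where in the last step I use $\FD \geq 1$. For (iii), the same decomposition reduces $\cov(X_k^m, X_k^n)$ to the diagonal sum $\sum_{i=0}^k \E[t_i^2]\,\E[\nu_i^m \nu_i^n]$, because the cross-terms with $i \neq j$ vanish by independence; this diagonal sum is zero when $m \neq n$ by the second moment identity.

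For (iv), integrability of $X_k^n$ follows from (ii) via the Cauchy--Schwarz inequality, so the only thing to check is the conditional mean-zero increment. Since $(t_{k+1}, \nu_{k+1})$ is independent of $\F_k$, I compute $\E[X_{k+1}^n - X_k^n \mid \F_k] = \E[t_{k+1}]\E[\nu_{k+1}^n] = 0$, which gives the martingale property. The entire argument is routine; the only step deserving an explicit word is the pair of sphere moment identities above, and I do not foresee any genuine obstacle.
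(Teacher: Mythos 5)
Your proof is correct and is precisely the routine verification the paper leaves to the reader (the lemma is stated as ``straightforward to check'' with no proof supplied), resting as expected on independence of the $(t_i,\nu_i)$, the exponential moments, and the sphere moment identities. One small wording point: permutation invariance only yields the diagonal identity $\E[(\nu_i^n)^2]=1/\FD$; for the off-diagonal vanishing $\E[\nu_i^m\nu_i^n]=0$ with $m\neq n$ you should invoke the sign-flip invariance $\nu^m\mapsto-\nu^m$ that you already used for the first moments.
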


Suppose that $X_k = (X_k^1,\dots,X_k^\FD)$, then we would like to know the radius of $X_k$, i.e.,
\begin{align*}
R_X(k) = \max_{j \leq k} |X_j| = \max_{j \leq k} \left( \sum_{n=1}^\FD (X_j^n)^2 \right)^{1/2}.
\end{align*}
We note that if $R_X^n(k) = \max_{j \leq k} |X_j^n|$, then
\begin{align}\label{eq:radius_bound_approximate_branch}
\max_{1 \leq n \leq \FD} R_X^n(k) \leq R_X(k) \leq \sum_{n = 1}^\FD R_X^n(k).
\end{align}
The above observations will be used to describe the behaviour of a branch $\CB_\lambda$ when $\lambda \to \infty$.
\begin{lemma}\label{lem:branch_to_zero}
For each $t >0$, the range of the branch $\CB_\lambda([0,t])$ converges to $\origin$ in probability with respect to the topology determined by the Hausdorff distance as $\lambda \to \infty$. That is, the random variable $\max_{0 \leq s \leq t} |\CB_\lambda(s)|$ converges to $0$ in probability as $\lambda \to \infty$.
\end{lemma}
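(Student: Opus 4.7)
The strategy is to decompose $M_\lambda(t) := \max_{0 \leq s \leq t} |\CB_\lambda(s)|$ into two contributions: (a) the positions of $\CB_\lambda$ at the jump times of the underlying Poisson process, and (b) the largest inter-jump segment length. Since the branch travels at unit speed in a fixed direction on each segment between consecutive jump times $\tau_k < \tau_{k+1}$, one has $|\CB_\lambda(s)| \leq |\CB_\lambda(\tau_k)| + (s - \tau_k) \leq |\CB_\lambda(\tau_k)| + t_k$ for $s \in [\tau_k, \tau_{k+1} \wedge t]$. Because $\CB_\lambda(\tau_j)$ coincides, up to re-indexing, with the random walk $X_j = \sum_{i=0}^{j} t_i \nu_i$ introduced in the paragraph preceding the lemma, this yields a bound of the form
\begin{align*}
M_\lambda(t) \leq \max_{-1 \leq k \leq N_\CB(t)} |X_k| + \max_{0 \leq k \leq N_\CB(t) + 1} t_k,
\end{align*}
with the convention $X_{-1} := 0$. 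The plan is then to show that both maxima on the right-hand side tend to $0$ in probability as $\lambda \to \infty$.

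The inter-jump maximum is the easier piece: conditioning on $N_\CB(t) = n$ and using the exponential tail $\p(t_k > \delta) = e^{-\lambda \delta}$ with a union bound gives
\begin{align*}
\p\left(\max_{0 \leq k \leq N_\CB(t)+1} t_k > \delta \right) \leq \E[N_\CB(t) + 2] \, e^{-\lambda \delta} \leq (\lambda t + 2) e^{-\lambda \delta},
\end{align*}
which vanishes as $\lambda \to \infty$ for any fixed $\delta > 0$. For the random-walk maximum, I would invoke Lemma~\ref{lem:projections}: each coordinate process $(X_k^n)$ is a mean-zero $(\F_k)$-martingale with $\var[X_k^n] \leq 2(k+1)/\lambda^2$, so Doob's $L^2$-maximal inequality yields $\E[\max_{k \leq K} (X_k^n)^2] \leq 8(K+1)/\lambda^2$. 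Summing over the $\FD$ coordinates using \eqref{eq:radius_bound_approximate_branch}, applying Jensen's inequality, and then conditioning on $N_\CB(t)$ (with $\E[\sqrt{N_\CB(t)+1}] \leq \sqrt{\lambda t + 1}$) produces a bound of order $\FD \sqrt{t/\lambda}$ on $\E[\max_{k \leq N_\CB(t)} |X_k|]$. A Markov inequality then upgrades this to convergence in probability.

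Combining the two estimates via $\{M_\lambda(t) > 2\delta\} \subset \{\max_k |X_k| > \delta\} \cup \{\max_k t_k > \delta\}$ gives $\p(M_\lambda(t) > 2\delta) \to 0$ for every $\delta > 0$, which is precisely the claimed Hausdorff convergence of $\CB_\lambda([0,t])$ to $\origin$. I do not anticipate a serious obstacle: the only minor subtlety is that the number of summands in each maximum is random and $\lambda$-dependent, but this is handled cleanly by tower-conditioning on $N_\CB(t)$. Intuitively, the proof just quantifies the heuristic that after $\sim \lambda t$ independent random-direction steps of mean length $1/\lambda$, the total displacement is $O(\sqrt{t/\lambda})$, while the largest single step is $O(\log(\lambda t)/\lambda)$, both of which vanish as $\lambda \to \infty$.
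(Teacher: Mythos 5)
Your overall strategy is the same as the paper's: reduce the running maximum of the branch to the maximum of the embedded walk $(X_k)$ and control the latter coordinatewise by a martingale maximal inequality via Lemma~\ref{lem:projections}. (Incidentally, your extra term $\max_k t_k$ is not needed: since $x \mapsto |x|$ is convex, on each linear segment between consecutive direction changes the maximum of $|\CB_\lambda(s)|$ is attained at an endpoint, so $\max_{0 \leq s \leq t} |\CB_\lambda(s)|$ is already bounded by $\max_{j \leq N_\CB(t)+1} |X_j|$; this is exactly how the paper passes to $R_X$.)

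There is, however, a genuine gap in how you handle the random index: ``tower-conditioning on $N_\CB(t)$'' does not work as written. Conditionally on $N_\CB(t)=n$, the increments $(t_i)_{i \leq n}$ are no longer i.i.d.\ $\Exp(\lambda)$ (they are constrained by $\sum_{i\le n} t_i \leq t < \sum_{i \le n+1} t_i$), so under this conditioning $(X_k^m)_{k \le n}$ is not a mean-zero martingale with $\var \le 2(k+1)/\lambda^2$, and you cannot simply insert the unconditional Doob bound with $K=n$ inside the conditional expectation; the same objection applies, more mildly, to your union bound $\p(\max_{k \le N_\CB(t)+1} t_k > \delta) \le \E[N_\CB(t)+2]e^{-\lambda\delta}$, whose conclusion is correct but whose stated justification is not. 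Two clean repairs exist. The paper's route: fix a deterministic $k$ (it takes $k=\lceil(\lambda t)^{3/2}\rceil$), write $\p(\max_{0\le s\le t}|\CB_\lambda(s)|\ge\epsilon) \le \p(R_X(k)\ge\epsilon)+\p(N_\CB(t)>k)$, bound the first term by Kolmogorov's inequality and the second by a Poisson Chernoff bound, which yields the explicit rate $O(\lambda^{-1/2})$. Alternatively, within your scheme, observe that $N_\CB(t)+1$ is an $(\F_k)$-stopping time and that $\{N_\CB(t)\ge k-1\}$ is independent of $(t_k,\nu_k)$; then Doob's inequality for the stopped martingale together with a Wald-type identity for $\E\bigl[\sum_{j\le N_\CB(t)+1}(t_j\nu_j^m)^2\bigr]$ gives $\E[\max_{k\le N_\CB(t)+1}|X_k|]=O(\sqrt{t/\lambda})$ rigorously, after which Markov's inequality finishes the argument as you intend. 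With either repair your proof is correct; without one, the key estimate on $\E[\max_{k\le N_\CB(t)}|X_k|]$ is unjustified.
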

\begin{proof}
We begin by noting that we can couple $\CB_\lambda$ with $(X_j)_{j \in \N}$ so that if $(t_j)_{j \in \N}$ are the times between the changes of direction in the branch $\CB_\lambda$ and $t_k^* = \sum_{j \leq k} t_j$ are the times at which the changes are made, then $\CB_\lambda( t_k^*) = X_k$, and such that the direction that the branch grows in a given time interval is the same as that of $X$ in the corresponding one. That is, the processes $\CB_\lambda$ and $(X_j)_{j \in \N}$ change direction at the same times and the growth directions are the same.  We note that for each $k \in \N$,
\begin{align}
	\max_{0 \leq s \leq t_k^*} |\CB_\lambda(s)| = \max_{j \leq k} |\CB_\lambda(t_j^*)| = R_X(k).
\end{align}
For $n = 1,\dots,\FD$, we have by the Kolmogorov martingale inequality and Lemma~\ref{lem:projections}, that
\begin{align}
	\p( R_X^n(k) \geq \epsilon) \leq \max_{j \leq k} \frac{\var(X_k^n) }{\epsilon} \leq \frac{2(k+1)}{\epsilon \lambda^2}.
\end{align}
Consequently, by~\eqref{eq:radius_bound_approximate_branch},
\begin{align}\label{eq:branch_bound_1}
	\p( R_X(k) \geq \epsilon) \leq \sum_{n=1}^\FD \p( R_X^n(k) \geq \epsilon/\FD) \leq \frac{2\FD^2(k+1)}{\epsilon \lambda^2}.
\end{align}
Let $N_\CB(t)$ be as in Definition~\ref{def:branch1} and recall that $N_\CB(t) \sim \Po(\lambda t)$. Thus, we have that
\begin{align}\label{eq:branch_bound_2}
	\p\!\left( \max_{0 \leq s \leq t} | \CB_\lambda(s)| \geq \epsilon \right) \leq \p( R_X(k) \geq \epsilon) + \p( N_\CB(t) > k).
\end{align}
Next we note that for any integer $k > \lambda t$, we have by a Chernoff bound that
\begin{align}\label{eq:poisson_bound}
	\p( N_\CB(t) \geq k) \leq \frac{(e \lambda t)^k e^{-\lambda t}}{k^k},
\end{align}
and hence, choosing $k = \lceil (\lambda t)^{3/2} \rceil$, we have by~\eqref{eq:branch_bound_1},~\eqref{eq:branch_bound_2} and~\eqref{eq:poisson_bound} that
\begin{align*}
	\p\!\left( \max_{0 \leq s \leq t} | \CB_\lambda(s)| \geq \epsilon \right) = O(\lambda^{-1/2}).
\end{align*}
Thus $\max_{0 \leq s \leq t} |\CB_\lambda(s)| \to 0$ in probability as $\lambda \to \infty$ and hence the proof is done.
\end{proof}
For more on the processes $X$ and $\CB_\lambda$ in the case $\FD = 2$, see \cite{schoug2015thesis}.

\section{On the geometry of $\CT_\lambda(t)$}\label{sec:geometry}
In this section, we study the geometry of the tree $\CT_\lambda(t)$. We begin by noting the following scaling relation, which is the key observation for the proofs of the present paper.
\begin{lemma}\label{lem:scaling}
Fix $s > 0$. Then, $\CT_\lambda(t)/s$ and $\CT_{\lambda s}(t/s)$ have the same law. In particular, the three sets $\CT_\lambda(t)/t$, $\CT_{\lambda t}(1)$ and $\CT_1(\lambda t)/\lambda$ have the same law.
\end{lemma}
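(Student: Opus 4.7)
The plan is to exploit the explicit construction of $\CT_\lambda(t)$ as a branching concatenation of straight-line intervals, together with the scaling property of the exponential distribution: if $\tau \sim \Exp(\lambda)$ and $s>0$, then $\tau/s \sim \Exp(\lambda s)$. Since the interval directions are $\Unif(\CS^{\FD-1})$-distributed, and this distribution is invariant under the operation (namely, rescaling a unit vector by a positive constant leaves it unchanged as a direction), all the randomness in the construction transforms in a controlled way under a spatial dilation.

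Concretely, I would proceed by coupling. Sample a single collection of i.i.d.\ directions $(\nu_e)_{e}$ indexed by the edges of an (abstract) infinite binary tree, together with i.i.d.\ $\Exp(\lambda)$ random variables $(\tau_e)_e$; build $\CT_\lambda$ by starting at the origin, assigning to the root edge direction $\nu_e$ and length $\tau_e$, branching at its end into two new edges, and so on, with the whole process truncated at time $t$ along every branch (each edge being drawn in full if its parent's cumulative length plus $\tau_e$ does not exceed $t$, and truncated otherwise). Build $\CT_{\lambda s}$ in parallel using the \emph{same} directions $\nu_e$ and the rescaled lengths $\tau_e/s$, truncated at time $t/s$. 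By the exponential scaling property, $(\tau_e/s)_e$ are i.i.d.\ $\Exp(\lambda s)$, so this yields a valid sample of $\CT_{\lambda s}(t/s)$.

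Now I would verify that, under this coupling, the map $x \mapsto x/s$ sends $\CT_\lambda(t)$ onto $\CT_{\lambda s}(t/s)$ pointwise. Both trees have identical abstract tree topology because the rule ``edge $e$ appears and is undergone at full length'' depends only on the comparison of the cumulative sum $\sum_{e' \preceq e} \tau_{e'}$ to $t$ (resp.\ $\sum_{e' \preceq e} \tau_{e'}/s$ to $t/s$), and these inequalities are equivalent. Along each retained edge, both trees grow in the same direction $\nu_e$; the length in $\CT_\lambda$ is $\tau_e$ (or the truncated residual), while in $\CT_{\lambda s}$ it is exactly $1/s$ times that. Combined with the identical origin, this gives $\CT_\lambda(t)/s = \CT_{\lambda s}(t/s)$ on the coupling, hence the equality in law. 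The final ``in particular'' statement then follows by taking $s=t$ and $s=1/\lambda$.

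There is no substantive obstacle here; the only thing to be careful about is the bookkeeping of the terminal (possibly truncated) interval on each branch. The argument is essentially immediate once one observes that the scaling operation $x \mapsto x/s$ simultaneously rescales the spatial lengths by $1/s$ and reparametrises the intrinsic time parameter (arc length along each branch) by the same factor, which is precisely the content of the exponential scaling relation.
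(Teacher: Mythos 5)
Your proof is correct and rests on exactly the same observation as the paper's (one-line) proof, namely that $\tau/s \sim \Exp(\lambda s)$ when $\tau \sim \Exp(\lambda)$ while the uniform directions are unaffected by dilation; your explicit coupling on the abstract binary tree is just a more detailed write-up of the paper's argument.
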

\begin{proof}
This follows since if $\tau \sim \Exp(\lambda)$ and $s > 0$, then $\tau/s \sim \Exp(\lambda s)$. In other words, rescaling the lengths of the intervals by dividing by a number $s > 0$,  corresponds to increasing the splitting intensity by a factor of $s$ and dividing the growth time of the tree by $s$ (since the length of any branch is then divided by $s$).
\end{proof}
Thus, by Lemma~\ref{lem:scaling}, in studying $\CT_\lambda(t)$ for large $t$, we can (for some properties) instead study $\CT_\lambda(1)$ for large $\lambda$. Note here that any behaviour that is of scale $o(t)$, such as lower order radius terms or possibly fluctuations of the outer boundary, will not show up when rescaling as above.

\begin{figure}[h!]
	\centering
		\includegraphics[width=0.49\textwidth]{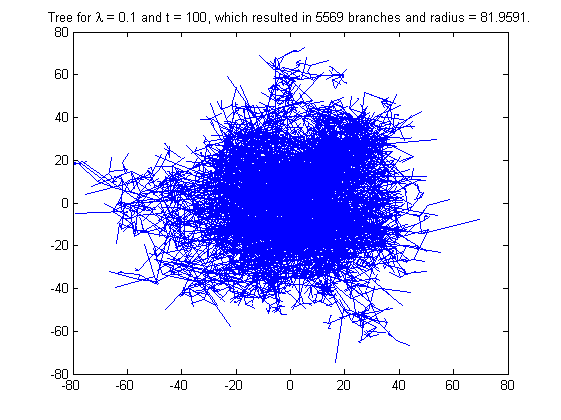} \hspace{0.01\textwidth}\includegraphics[width=0.49\textwidth]{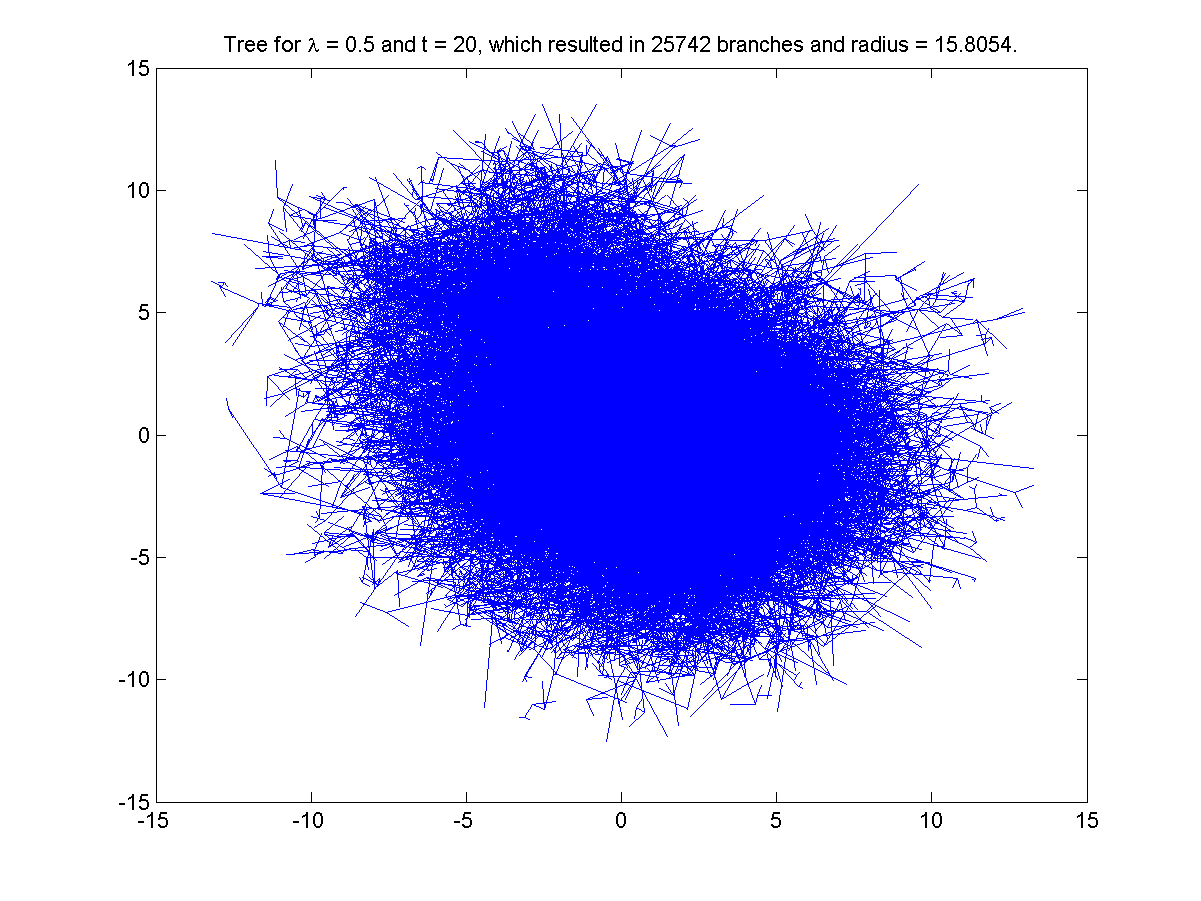}
		\caption{{\bf Left:}  Simulation of $\CT_\lambda(t)$ for $\FD = 2$, with $t = 100$ and $\lambda = 0.1$. {\bf Right:} Simulation of $\CT_\lambda(t)$ for $\FD = 2$, with $t = 20$ and $\lambda = 0.5$.  Note that by Lemma~\ref{lem:scaling}, the laws of the simulations are the same, up to scaling the tree by $5$.}
		\label{fig:tree_simulations}
\end{figure}

\subsection{Radius of the tree}\label{sec:radius}
Let $\CR_\lambda(t) = \rad \, \CT_\lambda(t) = \sup_{x \in \CT_\lambda(t)} |x|$ denote the radius of $\CT_\lambda(t)$.  It is natural to consider the asymptotics of $\CR_\lambda(t)$ as $t \to \infty$ and by virtue of Lemma~\ref{lem:scaling} (which implies that $\CR_\lambda(t)/t$ and $\CR_{\lambda t}(1)$ have the same law), we may instead study $\CR_\lambda(1)$ as $\lambda \to \infty$. We begin by noting that the law of $\CR_\lambda(t)$ is continuous in $\lambda$.
\begin{lemma}
For each $t,r>0$, the function $\lambda \mapsto \p(\CR_\lambda(t) \leq r)$ is continuous for all $t,r>0$.
\end{lemma}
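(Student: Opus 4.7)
My plan is to couple all the trees $\{\CT_\lambda(t)\}_{\lambda > 0}$ on a common probability space so that $\lambda \mapsto \CR_\lambda(t)$ becomes almost surely continuous, and then upgrade this to continuity of $\lambda \mapsto \p(\CR_\lambda(t) \leq r)$ by combining the portmanteau theorem with a short atomlessness argument.

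To build the coupling, I would label the vertices of an infinite binary tree $\mathbf{T}$ and attach to each $v \in \mathbf{T}$ an independent pair $(E_v, \nu_v)$ with $E_v \sim \Exp(1)$ and $\nu_v \sim \Unif(\CS^{\FD-1})$. For each $\lambda > 0$, I build $\CT_\lambda(t)$ by letting the segment at $v$ start at its parent's spatial endpoint and grow in direction $\nu_v$ for a duration $\min(E_v/\lambda, (t - s_v(\lambda))^+)$, where $s_v(\lambda) := \sum_w E_w/\lambda$, the sum being taken over strict ancestors $w$ of $v$ in $\mathbf{T}$. By the scaling property of exponentials this realises the correct marginal law of $\CT_\lambda(t)$ for every $\lambda$ simultaneously. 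Now fix $\lambda_0$ and a compact neighbourhood $I \ni \lambda_0$: almost surely only finitely many vertices $v$ satisfy $s_v(\lambda) < t$ for some $\lambda \in I$ (the Yule process at rate $\max I$ being a.s.\ finite at time $t$), and for each such $v$ both $s_v(\lambda)$ and the surviving segment length are continuous in $\lambda$ on $I$, with a new edge continuously ``born'' of length zero at the threshold $s_v(\lambda) = t$. Consequently $\lambda \mapsto \CT_\lambda(t)$ is continuous in the Hausdorff distance almost surely, and so is $\lambda \mapsto \CR_\lambda(t)$.

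For $\lambda_n \to \lambda_0$ we therefore have $\CR_{\lambda_n}(t) \to \CR_{\lambda_0}(t)$ a.s.\ and hence in distribution, so the portmanteau theorem gives $\p(\CR_{\lambda_n}(t) \leq r) \to \p(\CR_{\lambda_0}(t) \leq r)$ at every continuity point $r$ of the CDF of $\CR_{\lambda_0}(t)$. The CDF is constantly $1$ for $r \geq t$ (trivially continuous in $\lambda$), so it remains to rule out atoms for $r \in (0, t)$. Conditioning on the Yule realisation, on $\{\CN_{\lambda_0}(t) \geq 2\}$ every leaf is the endpoint of a path consisting of at least two segments $\ell_i \nu_i$ with positive $\ell_i$ and independent uniform $\nu_i$, so $\sum_i \ell_i \nu_i$ is an absolutely continuous random vector in $\R^\FD$ (the case $\FD = 1$ following by integrating the discrete sign choices against the continuous joint density of the $\ell_i$); the maximum over the finitely many leaf distances therefore has no atom in $(0, t)$. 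The remaining atom at $r = t$ has mass $e^{-\lambda_0 t}$ (corresponding to $\CN_{\lambda_0}(t) = 1$), which is itself continuous in $\lambda_0$.

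The main obstacle I expect is the Hausdorff-continuity claim, in particular verifying that $\lambda \mapsto \CT_\lambda(t)$ suffers no jumps at the critical values where an edge first comes into being: the coupling is designed precisely so that a new segment appears with length zero and grows continuously from there, but this geometric point is load-bearing. A secondary subtlety is the atomlessness in dimension $\FD = 1$, where the direction of each individual segment is discrete and so per-segment continuity fails, but the unconditional distribution of $\sum_i \ell_i \nu_i$ remains absolutely continuous thanks to the random exponential lengths.
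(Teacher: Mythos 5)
Your coupling is, in substance, the paper's own argument: the proof in the paper simply writes $\CR_\lambda(t) \overset{d}{=} \CR_1(\lambda t)/\lambda$ (the scaling lemma) and observes that $\lambda \mapsto \CR_1(\lambda t)/\lambda$ is almost surely continuous, which is exactly the coupling you build by hand when you scale a common family of $\Exp(1)$ clocks by $1/\lambda$ — so the a.s.\ Hausdorff-continuity part of your proposal is the same route, just spelled out in more detail. What you add beyond the paper is the explicit passage from a.s.\ convergence to convergence of the values $\p(\CR_\lambda(t)\leq r)$, i.e.\ the portmanteau/atomlessness step (the paper leaves this implicit, and also checks continuity at $\lambda=0$ separately, which your coupling handles since for $\lambda$ below a random threshold no branching occurs before $t$).

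In that added step there are a few inaccuracies you should repair, though none is fatal. First, the radius need not be attained at a leaf: a branching point whose two descending subtrees double back towards the origin can realise the maximum, so the maximum must be taken over all vertices (branching points and leaves); the same atomlessness computation covers branching points, whose positions are non-degenerate continuous functionals of the jointly continuous inter-branching times. Second, in $\FD=1$ the leaf position is \emph{not} absolutely continuous: with positive probability all signs along a branch coincide and the leaf sits exactly at $\pm t$, so your claim that the exponential lengths restore absolute continuity is false; what is true, and all you need, is that the law of $|\sum_i \ell_i\nu_i|$ has no atom in $(0,t)$, which follows by conditioning on the sign pattern (mixed signs give a continuous law, equal signs give $|x|=t$). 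Third, and for the same reason, the atom of $\CR_{\lambda_0}(t)$ at $r=t$ has mass $e^{-\lambda_0 t}$ only for $\FD\geq 2$; in $\FD=1$ it is strictly larger, since a branch may keep the same direction after branching with probability $1/2$ each time. This last point is harmless because $\p(\CR_\lambda(t)\leq r)=1$ for all $r\geq t$ anyway, so only the absence of atoms in $(0,t)$ is load-bearing.
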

\begin{proof}
By Lemma~\ref{lem:scaling}, $\CR_\lambda(t)$ has the same distribution as $\CR_1(\lambda t)/\lambda$, which is almost surely a continuous function of $\lambda \in (0,\infty)$ for each fixed $t > 0$. To see that $\CR_\lambda(t)$ is continuous at $\lambda = 0$, note that $\p( \text{no branching occurs before time} \ t) = \p(\Exp(\lambda) > t) = e^{-\lambda t} \to 1$ as $\lambda \to 0$.
\end{proof}

Next we note the following immediate consequence of Lemma~\ref{lem:scaling}, namely that any limit of $\CR_\lambda(t)/t$ is independent of $\lambda$. Note, that at this point, it is not clear that this means anything for the radius, since $\CR_\lambda(t)$ could be of size $o(t)$. However, we shall see below that $\CR_\lambda(t)$ is actually of order $t$ as $t \to \infty$ (as opposed to a typical branch, which has radius of order $o(t)$).

\begin{corollary}\label{cor:limits_independent_of_intensity}
The laws of $\liminf_{t \to \infty} \CR_\lambda(t)/t$ and $\limsup_{t \to \infty} \CR_\lambda(t)/t$ do not depend on $\lambda$.
\end{corollary}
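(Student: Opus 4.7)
The plan is to upgrade Lemma~\ref{lem:scaling} to a process-level identity in law and then apply it with $s=\lambda$ to reduce the $\liminf$ and $\limsup$ of $\CR_\lambda(t)/t$ to corresponding quantities for $\CT_1$, which manifestly do not involve $\lambda$.

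First, I would note that the argument in the proof of Lemma~\ref{lem:scaling} really gives something stronger than the stated fixed-time identity: dividing every segment length by $s$ is the same as replacing each underlying $\Exp(\lambda)$ waiting time by an $\Exp(\lambda s)$ waiting time and reparametrising time by $t\mapsto t/s$. This transformation acts at the level of the entire construction of the tree, so the two random-set-valued processes
\[
    (\CT_\lambda(t)/s)_{t\geq 0} \quad \text{and} \quad (\CT_{\lambda s}(t/s))_{t\geq 0}
\]
have the same law. Taking the radius at each time gives the process-level identity
\[
    \bigl(\CR_\lambda(t)/s\bigr)_{t\geq 0} \stackrel{d}{=} \bigl(\CR_{\lambda s}(t/s)\bigr)_{t\geq 0}.
\]

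Setting $s=\lambda$ yields $(\CR_\lambda(t)/\lambda)_{t\geq 0}\stackrel{d}{=}(\CR_1(t/\lambda))_{t\geq 0}$. Multiplying both sides by the deterministic factor $\lambda/t$ preserves the joint law, so
\[
    \bigl(\CR_\lambda(t)/t\bigr)_{t>0} \stackrel{d}{=} \bigl(\CR_1(t/\lambda)/(t/\lambda)\bigr)_{t>0}.
\]
Since $\liminf$ and $\limsup$ as $t\to\infty$ are measurable functionals of a path, and since the deterministic time change $u=t/\lambda$ (with $\lambda>0$ fixed) is a bijection of $(0,\infty)$ sending $\infty$ to $\infty$, I would conclude
\[
    \liminf_{t\to\infty} \frac{\CR_\lambda(t)}{t} \stackrel{d}{=} \liminf_{u\to\infty} \frac{\CR_1(u)}{u},
\]
together with the analogous identity for $\limsup$. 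Neither right-hand side depends on $\lambda$, which is the claim.

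The only step that is not completely immediate from the stated form of Lemma~\ref{lem:scaling} is this process-level identity in law. I do not expect it to be a real obstacle, since the scaling is realised by a deterministic bijection on the underlying sequences of exponential lengths and uniform directions defining the tree and therefore applies jointly at every time; if one wanted to be fully rigorous, one could alternatively verify the identity for finite-dimensional marginals at an arbitrary countable dense set of times and use measurability of $\liminf$ and $\limsup$ on the restricted path.
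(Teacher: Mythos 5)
Your proposal is correct and takes essentially the same route as the paper, whose entire proof of Corollary~\ref{cor:limits_independent_of_intensity} is the single line ``Follows immediately from Lemma~\ref{lem:scaling}.'' The process-level strengthening of the scaling identity that you spell out (applied with $s=\lambda$, so that $(\CR_\lambda(t)/t)_{t>0}$ and $(\CR_1(t/\lambda)/(t/\lambda))_{t>0}$ agree in law as processes) is exactly the implicit content behind the paper's one-line argument, and making it explicit is a legitimate, indeed slightly more careful, rendering of the same idea.
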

\begin{proof}
Follows immediately from Lemma~\ref{lem:scaling}.
\end{proof}

Now we shall lower bound the radius of the tree. Seeing as a branch will converge to the point $\origin$ as $\lambda \to \infty$ (by Lemma~\ref{lem:branch_to_zero}), it might seem reasonable to expect $\CR_\lambda(1)$ to decrease to $0$ as $\lambda \to \infty$ as well. However, we shall see that the sheer number of branches forces the radius of the tree to be positive.

\begin{proposition}\label{prop:radius_bound}
Fix $\alpha > 0$. For each $\epsilon >0$, there exists some $\lambda^* = \lambda^*(\alpha,\epsilon)>0$, such that for all $\lambda \geq \lambda^*$, we have
\begin{align*}
	\p( \CR_\lambda(1) \leq 1/2 - \epsilon) \leq (1+\alpha) e^{-\epsilon \lambda}.
\end{align*}
\end{proposition}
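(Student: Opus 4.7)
The plan is to use the scaling symmetry from Lemma~\ref{lem:scaling} as the main lever: since $\CR_\lambda(1) \stackrel{d}{=} \CR_1(\lambda)/\lambda$, the statement is equivalent to the large-time estimate $\p(\CR_1(\lambda) \leq (1/2 - \epsilon)\lambda) \leq (1+\alpha) e^{-\epsilon\lambda}$ for the unit-rate tree. This replaces ``many branchings at time one'' by ``growing for a long time'' and puts us in a regime where branching-process intuition is natural.

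Next, I would set up a recursive decomposition based on the first branching time $\tau_1 \sim \Exp(1)$ and the initial direction $\nu_0 \in \CS^{\FD - 1}$. On $\{\tau_1 \geq (1/2-\epsilon)\lambda\}$, the initial straight segment alone already witnesses $\CR_1(\lambda) \geq (1/2-\epsilon)\lambda$, but this event has probability only $e^{-(1/2-\epsilon)\lambda}$, which is far from yielding the desired $1 - (1+\alpha) e^{-\epsilon\lambda}$ for the good event and hence is not enough by itself. On the complementary event $\{\tau_1 = s\}$ with $s < (1/2-\epsilon)\lambda$, the tree decomposes into the initial segment and two conditionally independent subtrees attached at $s\nu_0$, each of the same law as $\CT_1(\lambda-s)$ up to a fresh uniform initial direction. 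Using the conditional independence emphasised in Remark~\ref{rmk:independence_descending_trees}, this yields an integral inequality of the shape
\begin{equation*}
\p\bigl(\CR_1(\lambda) \leq (1/2-\epsilon)\lambda\bigr) \leq \int_0^{(1/2-\epsilon)\lambda} e^{-s}\, \Psi(s)^2 \, ds,
\end{equation*}
where $\Psi(s)$ is the probability that a single subtree started at $s\nu_0$ and grown for time $\lambda - s$ stays inside $\{x \in \R^\FD : |x| \leq (1/2-\epsilon)\lambda\}$, and which depends only on $s$ by rotation invariance of the law of $\CT_1(\lambda-s)$.

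To control $\Psi(s)$, I would observe that any such subtree is in particular contained in the ball of radius $(1/2-\epsilon)\lambda + s$ around its own starting point $s\nu_0$, so by the triangle inequality $\Psi(s) \leq \p(\CR_1(\lambda-s) \leq (1/2-\epsilon)\lambda + s)$. A second application of Lemma~\ref{lem:scaling} rephrases the right-hand side as a bound of the same type on $\CR_{\lambda-s}(1)$, but at a slightly larger normalised radius. A bootstrap/induction scheme using this transformation, combined with the sharp exponential and first-success tail asymptotics developed in Appendix~\ref{app:asymptotics}, iterates the inequality and delivers the bound $(1+\alpha) e^{-\epsilon\lambda}$ for $\lambda \geq \lambda^*(\alpha,\epsilon)$, the prefactor $(1+\alpha)$ absorbing the subleading corrections coming from those asymptotic expansions.

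The main obstacle I expect is closing this iteration cleanly. The quantity $\Psi(s)$ asks the subtree to stay inside an \emph{off-centred} ball, which breaks the direct scaling symmetry of Lemma~\ref{lem:scaling}, so the delicate point is to balance the exponential density $e^{-s}$ for $\tau_1$ against the deterioration of $\Psi(s)$ as the starting point $s\nu_0$ drifts away from the centre of the constraint ball. I expect the Appendix~A asymptotics to be used precisely to quantify this trade-off sharply enough to extract the exponent $\epsilon\lambda$, rather than only the weaker $(1/2-\epsilon)\lambda$ that the trivial first-branching estimate would give.
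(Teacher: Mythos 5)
There is a genuine gap at the heart of your plan: the bootstrap on $\Psi(s)$ does not close. Your triangle-inequality step recentres the subtree at $s\nu_0$ and replaces the constraint ball by one of radius $(1/2-\epsilon)\lambda+s$, so the \emph{relative} radius you must control for the subtree is $\rho(s)=\bigl((1/2-\epsilon)\lambda+s\bigr)/(\lambda-s)$, which increases with $s$ and crosses $1/2$ already around $s\approx\tfrac{2}{3}\epsilon\lambda$. Beyond that point you have no decay to feed back into the iteration: whether $\p(\CR_\lambda(1)\leq\rho)$ decays at all for $\rho>1/2$ is precisely the open question of whether the limiting radius $d$ exceeds $1/2$, which the paper explicitly does not resolve, so there is no inductive hypothesis available and no clear induction variable (recursing degrades the relative radius further). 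Even if you optimistically assume the target bound $\p(\CR_1(u)\leq\rho u)\lesssim e^{-(1/2-\rho)u}$ wherever $\rho<1/2$ and use the trivial bound $\Psi\leq 1$ beyond, the integral $\int_0^{(1/2-\epsilon)\lambda}e^{-s}\Psi(s)^2\,ds$ is dominated by $s\approx\tfrac{2}{3}\epsilon\lambda$ and yields at best an exponent of order $\tfrac{2}{3}\epsilon\lambda$, strictly short of the claimed $\epsilon\lambda$. Finally, the Appendix lemmas (Lemma~\ref{lem:asymptotics_Fs} on first-success tails and Lemma~\ref{lem:asymptotics_exponential_maximum} on maxima of exponentials) say nothing about off-centred containment probabilities, so they cannot supply the trade-off you invoke to close the iteration.

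The paper's proof follows a different mechanism, which is worth internalising because it explains where the exponent $\epsilon\lambda$ really comes from. One waits until time $1/2+\epsilon$, by which time Proposition~\ref{prop:number_of_leaves} and Lemma~\ref{lem:asymptotics_Fs} give at least $e^{\lambda/2}$ leaves except on an event of probability about $e^{-\epsilon\lambda}$ --- this is the dominant error term and the sole source of the exponent. Conditionally, by Hoeffding at least $\tfrac14 e^{\lambda/2}$ of these leaves grow into the outward half-sphere $\{y:\ x\cdot y\geq 0\}$, and by memorylessness together with Lemma~\ref{lem:asymptotics_exponential_maximum}, with failure probability only doubly exponentially small in $\lambda$, at least one of them grows without branching for time $1/2-\epsilon$; any such leaf certifies $\CR_\lambda(1)\geq 1/2-\epsilon$. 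The essential point is the exponentially many \emph{independent} attempts at a ballistic stretch (each of probability $e^{-(1/2-\epsilon)\lambda}$, beaten by $e^{\lambda/2}$ trials), which a one-step first-branching decomposition with only two subtrees cannot reproduce.
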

\begin{proof}
Note that by Proposition~\ref{prop:number_of_leaves} and Lemma~\ref{lem:asymptotics_Fs} (with $y = e^{-\lambda(1/2+\epsilon)}$, $a = \tfrac{1}{1+2\epsilon} = 1 - \tfrac{\epsilon}{1/2+\epsilon}$ and $b = 1$),
\begin{align*}
	\p( \CN_\lambda(1/2+\epsilon) \leq e^{\lambda/2}) &\leq  \sum_{n=1}^\infty \frac{\exp(-(n-1+\tfrac{\epsilon}{1/2+\epsilon})\lambda(1/2+\epsilon))}{n} \\
	&= e^{-\epsilon \lambda} \left(1 + \sum_{n=1}^\infty \frac{\exp(-n\lambda(1/2+\epsilon))}{n+1} \right) \! .
\end{align*}
Note that the sum $C_{\lambda,\epsilon} \coloneqq \sum_{n=1}^\infty \tfrac{\exp(-n\lambda(1/2+\epsilon))}{n+1}$ converges to $0$ as $\lambda \to \infty$. Thus, with probability at least $1-e^{-\epsilon \lambda} (1 + \sum_{n=1}^\infty e^{-n\lambda(1/2+\epsilon)}/(n+1) )$, there are at least $e^{\lambda/2}$ leaves at time $1/2+\epsilon$. Moreover, if the tree branches at the point $x \in B(0,1/2+\epsilon)$, then if the direction of growth for any of the two new branches belongs to the set $\CS_\half^{\FD-1}(x) = \{y \in \CS^{\FD-1}: x\cdot y \geq 0\}$, that is, the half of $\CS^{\FD-1}$ that is the closest to $x/|x|$ (this has probability $1/2$ for each branch), and the branch grows for time at least $1/2-\epsilon$ from $x$ before branching, then $\CR_\lambda(1) \geq 1/2-\epsilon$. We say that a branch grows in the \emph{good} direction if its direction of growth belongs to $\CS_\half^{\FD-1}(x)$, where $x$ denotes the most recent point of branching for the branch. Assuming that we are on the event that $\CN_\lambda(1/2+\epsilon) \geq e^{\lambda/2}$, we have by Hoeffding's bound, that the conditional probability that fewer than $\tfrac{1}{4} e^{\lambda/2}$ of the (at least) $e^{\lambda/2}$ active ends are growing in the the good direction is upper bounded by $\exp(-e^{\lambda/2}/8)$. On the event that at least $\tfrac{1}{4} e^{\lambda/2}$ leaves at time $1/2+\epsilon$ are growing in the good direction, we have by the memoryless property of the exponential distribution and Lemma~\ref{lem:asymptotics_exponential_maximum}, that the conditional probability that none of the $\tfrac{1}{4} e^{\lambda/2}$ leaves produces an interval of length at least $1/2-\epsilon$, is at most $\wt{C}_{\lambda,\epsilon} \exp(-\tfrac{1}{4} e^{\epsilon\lambda})$, where
\begin{align*}
	\wt{C}_{\lambda,\epsilon} = \exp\!\left( -\frac{1}{4} e^{\lambda/2} \sum_{n=2}^\infty \frac{e^{-n(1/2 + \epsilon)\lambda}}{n} \right) / \left(1-e^{-(1/2 + \epsilon)\lambda} \right)\! .
\end{align*}
Note that for each fixed $\epsilon>0$, $\wt{C}_{\lambda,\epsilon} \to 1$ as $\lambda \to \infty$. Hence, we have that
\begin{align*}
	\p( \CR_\lambda(1) \geq 1/2 - \epsilon) &\geq (1 - e^{-\epsilon \lambda}(1+ C_{\lambda,\epsilon})) \times (1 - \exp(-e^{\lambda/2}/8)) \times \left(1-\wt{C}_{\lambda,\epsilon} \exp\!\left( -\frac{1}{4} e^{\epsilon \lambda} \right) \right) \! .
\end{align*}
Since for large enough $\lambda$ (depending on $\alpha$ and $\epsilon$), this is lower bounded by $1 - (1+\alpha) e^{-\epsilon \lambda}$, the result follows.
\end{proof}

It turns out that a bound as in Proposition~\ref{prop:radius_bound} is the key to obtaining the main results in Theorems~\ref{thm:main_result} and~\ref{thm:main_result_1}. However, other than the calculations working nicely, there is no obvious reason why the value $1/2$ should be special or optimal for the model. Below, we shall introduce a condition which is a priori weaker than the exponential decay of the probability $\p(\CR_\lambda(1) \leq d - \epsilon)$ and might be easier to check, which turns out to imply bounds which are strong enough to give the main results of the paper with $d$ in place of $1/2$.

By virtue of Proposition~\ref{prop:radius_bound}, we now know that $\CR_\lambda(1)$ will be of constant order for all $\lambda$ large, in fact, it is extremely unlikely that it is smaller than $1/2 - \epsilon$ for large $\lambda$.  Looking at small $\lambda$, we see that increasing $\lambda$ slightly actually increases the probability $\p(\CR_\lambda(1) \leq 1/2 - \epsilon)$, since $\CR_0(1) = 1$ almost surely, and increasing the intensity increases the rate of branching, which gives $\CT_\lambda(1)$ the chance to branch and possibly keep its branches inside $B(0,1/2-\epsilon)$. However, as the intensity and hence the average number of branches increases, it eventually gets less likely that $\CT_\lambda(1)$ stays contained in $B(0,1/2-\epsilon)$ (despite the fact that the average distance from the origin travelled by a branch decreases to zero, by Lemma~\ref{lem:branch_to_zero}). We therefore find it believable that for each fixed $d \in [0,1]$, $\p(\CR_\lambda(1) \leq d)$ should either be lower bounded away from zero for all $\lambda$ large enough or converge to zero as $\lambda \to \infty$. Further evidence of this is discussed in Section~\ref{sec:connection_infinity}.

With the above discussion in mind, it is reasonable to expect that there is some maximal number $d \in [1/2,1]$ such that there exists some $p > 0$ for which $\p(\CR_\lambda(1) \geq d) \geq p$ for all $\lambda$ large enough. (Indeed, by Proposition~\ref{prop:radius_bound}, this holds for all $d < 1/2$.) We now prove a $0$-$1$-law type result, which states that the bound $\p(\CR_\lambda(1) \geq d) \geq p$ holding for all $\lambda$ large enough implies a bound which is similar to that of Proposition~\ref{prop:radius_bound} and is hence strong enough to imply the main results with $d$ in place of $1/2$.

\begin{lemma}\label{lem:zero_one_law_radius}
Fix $d \in [0,1]$. Suppose that there exist $p > 0$ and $\lambda_0 > 0$ such that $\p( \CR_\lambda(1) \geq d) \geq p$ for all $\lambda \geq \lambda_0$. Then, for each $\alpha > 0$, $\beta \in (0,1)$ and $\epsilon > 0$, there exists $\lambda_* = \lambda_*(\alpha,\beta,\epsilon,\lambda_0,p) > 0$ such that for each $\lambda \geq \lambda_*$, we have
\begin{align*}
	\p( \CR_\lambda(1) \leq d - \epsilon) \leq (1+\alpha) e^{-\epsilon \beta\lambda/2}.
\end{align*}
\end{lemma}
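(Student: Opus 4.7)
The plan is to mimic the three-step structure of Proposition~\ref{prop:radius_bound}, but to replace the ``Yule leaves $\times$ good direction $\times$ long exponential'' chain by an ``early splitting plus independent descending trees'' argument that feeds on the hypothesis via Lemma~\ref{lem:scaling}. Fix $\delta>0$ (to be chosen at the end, satisfying $\epsilon\beta/2<\delta\le \epsilon/(d+1)$) and look at the tree at time $\delta$. Since every branch has length $\delta$ at that time, all leaves $L_1,\dots,L_N$ of $\CT_\lambda(\delta)$ lie in $\overline{B(\origin,\delta)}$, and by Proposition~\ref{prop:number_of_leaves} we have $N=\CN_\lambda(\delta)\sim\Fs(e^{-\lambda\delta})$.

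By the memoryless property and the independence spelled out in Remark~\ref{rmk:independence_descending_trees}, conditionally on $\CT_\lambda(\delta)$ the $N$ descending trees rooted at $L_1,\dots,L_N$ are independent, and by rotational invariance of $\CT_\lambda$ the radius of the $i$-th descending tree measured from $L_i$ has the unconditional law of $\CR_\lambda(1-\delta)$. Lemma~\ref{lem:scaling} identifies this with $(1-\delta)\CR_{\lambda(1-\delta)}(1)$ in distribution, so provided $\lambda(1-\delta)\ge\lambda_0$ the hypothesis gives that each descending tree reaches distance $\ge d(1-\delta)$ from its root with probability at least $p$. On this event, the triangle inequality produces a point of $\CT_\lambda(1)$ at distance at least $d(1-\delta)-\delta=d-(d+1)\delta\ge d-\epsilon$ from the origin, where the last inequality uses $\delta\le\epsilon/(d+1)$.

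Assembling the pieces, the conditional probability that all $N$ descending trees fail is at most $(1-p)^N$, and the probability generating function of $\Fs(e^{-\lambda\delta})$ gives
\begin{equation*}
\p(\CR_\lambda(1)\le d-\epsilon)\le\E[(1-p)^N]=\frac{(1-p)e^{-\lambda\delta}}{p+(1-p)e^{-\lambda\delta}}\le\frac{1-p}{p}\,e^{-\lambda\delta}.
\end{equation*}
Since $\beta<1$ and $d\le 1$ imply $\epsilon\beta/2<\epsilon/(d+1)$, choose $\delta=\epsilon\beta/2+\eta$ for some $\eta\in(0,\epsilon/(d+1)-\epsilon\beta/2]$; the bound becomes $(1-p)p^{-1}e^{-\eta\lambda}\cdot e^{-\epsilon\beta\lambda/2}$, and taking $\lambda_*$ so large that both $\lambda_*(1-\delta)\ge\lambda_0$ and $(1-p)p^{-1}e^{-\eta\lambda_*}\le 1+\alpha$ finishes the proof for all $\lambda\ge\lambda_*$. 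The only delicate point is the parameter bookkeeping: one must arrange $\delta$ to be large enough that $\lambda\delta$ strictly dominates $\lambda\epsilon\beta/2$ in the exponent while small enough that the geometric cost $(d+1)\delta$ still fits inside $\epsilon$, and the slack provided by $\beta<1$ (together with $d\le 1$) is precisely what makes such a $\delta$ exist.
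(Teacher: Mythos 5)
Your proof is correct, and its skeleton is the same as the paper's: split at a small time, use the conditional independence of the descending trees of the leaves (Remark~\ref{rmk:independence_descending_trees}) together with the scaling Lemma~\ref{lem:scaling} so that the hypothesis $\p(\CR_{\lambda'}(1)\geq d)\geq p$ applies to each descending tree, and conclude that with overwhelming probability at least one of them carries the tree to distance $\geq d-\epsilon$. Where you diverge is in how the number of leaves is handled: the paper splits at time $\epsilon/2$, invokes Lemma~\ref{lem:asymptotics_Fs} to get $\CN_\lambda(\epsilon/2)\geq e^{(1-\beta)\epsilon\lambda/2}$ except on an event of probability $\approx e^{-\epsilon\beta\lambda/2}$, and then bounds the conditional failure probability by the doubly exponentially small $(1-p)^{\exp((1-\beta)\epsilon\lambda/2)}$; you instead split at time $\delta$ slightly larger than $\epsilon\beta/2$ and compute $\E[(1-p)^{\CN_\lambda(\delta)}]$ exactly via the generating function of the first-success law (Proposition~\ref{prop:number_of_leaves}), getting $\tfrac{1-p}{p}e^{-\lambda\delta}$ in one stroke, with the slack $\eta=\delta-\epsilon\beta/2$ absorbing the constant $\tfrac{1-p}{p}$ for large $\lambda$. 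Your route is arguably more elementary (it bypasses Lemma~\ref{lem:asymptotics_Fs} entirely and gives an explicit non-asymptotic bound $\tfrac{1-p}{p}e^{-\lambda\delta}$), at the cost of making the splitting time, hence the constraint $\lambda(1-\delta)\geq\lambda_0$, depend on $\beta$ and on the bookkeeping $\delta\leq\epsilon/(d+1)$ (your use of $d\leq 1$ here mirrors the paper's use of $d\leq1$ in its scaling step); the paper's form, with its explicit intermediate bound~\eqref{eq:lower_bound_radius_exponential}, is reused verbatim in Proposition~\ref{prop:connection_probability_infinity}, which is something your streamlined bound would not directly supply. Two trivialities, shared with the paper's own proof: one should take $\delta<\epsilon/(d+1)$ strictly (or note that $\CR_\lambda(1)$ has no atom at $d-\epsilon$) to pass from ``some descending tree succeeds'' to ruling out $\{\CR_\lambda(1)\leq d-\epsilon\}$, and the argument implicitly assumes $\epsilon$ small enough that $\delta<1$, which is harmless since otherwise $d-\epsilon<0$ and the claim is vacuous.
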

For a bound that holds for all $\lambda > 0$, $\beta \in (0,1)$ and $\epsilon > 0$, see~\eqref{eq:lower_bound_radius_exponential}.
\begin{proof}
By the assumption of the lemma and Lemma~\ref{lem:scaling}, we have that if $\epsilon > 0$, then
\begin{align}\label{eq:lower_bound_radius_d}
	\p( \CR_\lambda(1-\epsilon/2) \geq d-\epsilon/2) \geq p,
\end{align}
for all $\lambda \geq \lambda_0/(1-\epsilon/2)$ and that if any descending tree of a point in $\CT_\lambda(\epsilon/2)$ has radius at least $d - \epsilon/2$ within time $1-\epsilon/2$ of its starting time, then $\CR_\lambda(1) \geq d - \epsilon$. Moreover, for fixed $\beta \in (0,1)$, we note that by Lemma~\ref{lem:asymptotics_Fs} (with $y = e^{-\epsilon \lambda/2}$, $a = 1-\beta$ and $b = 1$),
\begin{align}\label{eq:lower_bound_leaves_delta_half}
	\p( \CN_\lambda(\epsilon/2) \geq e^{(1-\beta) \epsilon \lambda /2}) \geq 1 - e^{-\epsilon \beta \lambda /2} \! \left( 1 + \sum_{n=1}^\infty \frac{e^{-n \epsilon \lambda/2}}{n+1} \right)\! .
\end{align}
By~\eqref{eq:lower_bound_radius_d}, we have the following lower bound on the conditional probability that that $\CR_\lambda(1) \geq d - \epsilon$ given the event $\{ \CN_\lambda(\epsilon/2) \geq e^{(1-\beta)\epsilon \lambda/2} \}$,
\begin{align}\label{eq:lower_bound_radius_leaves}
	\p( \CR_\lambda(1) \geq d-\epsilon \, | \,  \CN_\lambda(\epsilon/2) \geq e^{(1-\beta)\epsilon\lambda/2}) &\geq 1 - (1-p)^{\exp((1-\beta)\epsilon \lambda/2)} \nonumber \\
	&= 1 - \exp\!\left( e^{(1-\beta)\epsilon \lambda/2} \log(1-p) \right) \! .
\end{align}
Consequently, by~\eqref{eq:lower_bound_leaves_delta_half} and~\eqref{eq:lower_bound_radius_leaves}, we have that
\begin{align}\label{eq:lower_bound_radius_exponential}
	\p( \CR_\lambda(1) \geq d - \epsilon) \geq \left( 1 - \exp\!\left( e^{(1-\beta)\epsilon \lambda/2} \log(1-p) \right) \right) \! \left( 1 - e^{-\epsilon \beta \lambda/2} \! \left( 1 + \sum_{n=1}^\infty \frac{e^{-n \epsilon\lambda /2}}{n+1} \right) \right) \! .
\end{align}
Since $\sum_{n=1}^\infty e^{-{n \epsilon \lambda /2}}/(n+1)$ converges to $0$ as $\lambda \to \infty$ and $ \exp( e^{(1-\beta)\epsilon \lambda/2} \log(1-p) ) = o(e^{-\epsilon\beta \lambda /2})$ as $\lambda \to \infty$, we have that there exists $\lambda_* = \lambda_*(\alpha,\beta,\epsilon,\lambda_0,p) \geq \lambda_0/(1-\epsilon/2)$ such that $\p( \CR_\lambda(1) \leq d - \epsilon) \leq (1+\alpha) e^{-\epsilon \beta \lambda /2}$ for all $\lambda \geq \lambda_*$.
\end{proof}

Since the condition that there exist $d,p,\lambda_0 > 0$ such that $\p(\CR_\lambda(1) \geq d) \geq p$ is satisfied for each $d < 1/2$ and by Lemma~\ref{lem:zero_one_law_radius} guarantees a bound of similar sort as that of Proposition~\ref{prop:radius_bound}, we now take this assumption as the basis of our proofs. This makes it easier to strengthen the results, should one manage to prove said condition for larger $d$.

\begin{corollary}\label{cor:liminf_lower_bound}
Suppose that $d ,p,\lambda_0>0$ are such that $\p( \CR_\lambda(1) \geq d) \geq p$ for all $\lambda \geq \lambda_0$. Then, almost surely,
\begin{align*}
	\liminf_{t \to \infty} \CR_\lambda(t)/t \geq d.
\end{align*}
\end{corollary}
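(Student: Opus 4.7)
The plan is to reduce the continuous-time statement to a discrete-time Borel--Cantelli argument, using the scaling Lemma~\ref{lem:scaling} to trade large times for large intensities and then invoking Lemma~\ref{lem:zero_one_law_radius}.

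First I would fix $\epsilon > 0$ and any auxiliary parameters $\alpha > 0$ and $\beta \in (0,1)$. Lemma~\ref{lem:zero_one_law_radius}, applied under the standing hypothesis $\p(\CR_\lambda(1) \geq d) \geq p$ for $\lambda \geq \lambda_0$, yields $\lambda_* = \lambda_*(\alpha,\beta,\epsilon,\lambda_0,p)$ such that
\begin{align*}
	\p(\CR_\mu(1) \leq d - \epsilon) \leq (1+\alpha) e^{-\epsilon \beta \mu /2}
\end{align*}
for all $\mu \geq \lambda_*$. By the scaling identity in Lemma~\ref{lem:scaling}, $\CR_\lambda(n)/n$ and $\CR_{\lambda n}(1)$ have the same law, so for every $n \geq \lambda_*/\lambda$,
\begin{align*}
	\p(\CR_\lambda(n)/n \leq d - \epsilon) \leq (1+\alpha) e^{-\epsilon \beta \lambda n /2}.
\end{align*}

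The sum $\sum_{n \geq 1} e^{-\epsilon \beta \lambda n/2}$ is finite, so by the Borel--Cantelli lemma there is a full-probability event $\Omega_\epsilon$ on which $\CR_\lambda(n)/n > d - \epsilon$ for all but finitely many integers $n$. To pass from integer times to continuous $t$, I would use the fact that $t \mapsto \CR_\lambda(t)$ is non-decreasing (since $\CT_\lambda(s) \subset \CT_\lambda(t)$ for $s \leq t$): for $t \in [n, n+1]$,
\begin{align*}
	\frac{\CR_\lambda(t)}{t} \geq \frac{\CR_\lambda(n)}{n+1} = \frac{\CR_\lambda(n)}{n} \cdot \frac{n}{n+1}.
\end{align*}
On $\Omega_\epsilon$, letting $t \to \infty$ therefore gives $\liminf_{t \to \infty} \CR_\lambda(t)/t \geq d - \epsilon$.

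Finally I would intersect over $\epsilon = 1/k$, $k \in \N^+$, obtaining a full-probability event on which $\liminf_{t \to \infty} \CR_\lambda(t)/t \geq d - 1/k$ for every $k$, hence $\geq d$. There is no real obstacle here beyond keeping track of the order of quantifiers; the only non-routine ingredient is the monotonicity step, which crucially uses that the branches never shrink, and the scaling identity that converts an exponential-in-$\lambda$ bound into a summable tail in $n$.
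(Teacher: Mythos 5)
Your proposal is correct and follows essentially the same route as the paper: both convert the hypothesis into an exponential bound at integer times via Lemma~\ref{lem:zero_one_law_radius} and the scaling identity $\CR_\lambda(n)/n \overset{d}{=} \CR_{\lambda n}(1)$, sum the tails (your Borel--Cantelli step is the paper's explicit estimate of $\p(\exists j \geq k: E_j)$), and interpolate to continuous time using monotonicity of $t \mapsto \CR_\lambda(t)$ before letting $\epsilon \downarrow 0$ along a countable sequence.
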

\begin{proof}
For $k \in \N$ we consider the event $E_k = \{\CR_\lambda(k)/k \leq d - \epsilon \}$. We note that if $E_k$ does not occur, then
\begin{align}\label{eq:radius_ratio}
	\frac{\CR_\lambda(t)}{t} \geq \left(d - \epsilon \right) \frac{k}{k+1}, \quad k \leq t \leq k+1.
\end{align}
Let $F_k = \{ \exists j \geq k: E_j \ \text{occurs} \}$ and note that by~\eqref{eq:radius_ratio}, we have for each $k \in \N$ that
\begin{align*}
	\left\{ \liminf_{t \to \infty} \CR_\lambda(t)/t \leq (d - \epsilon) \tfrac{k}{k+1} \right\} \subset F_k.
\end{align*}
Moreover, we note that $F_k \subset \cup_{j \geq k} E_j$ and by Lemma~\ref{lem:zero_one_law_radius} with, say, $\alpha = 1$ and $\beta = 1/2$, we have for $k \geq \lambda_*(\alpha,\beta,\epsilon,\lambda_0,p)/\lambda$, that
\begin{align*}
	\p( E_k ) = \p( \CR_{k \lambda}(1) \leq d - \epsilon) \leq 2 e^{-k \epsilon \lambda/4}.
\end{align*}
Consequently, for $k \geq \lambda_*(\alpha,\beta,\epsilon)/\lambda$ we have that
\begin{align*}
	\p( F_k ) \leq \sum_{j \geq k} \p( E_j ) \leq \sum_{j \geq k} 2 e^{-j\epsilon \lambda/4} = \frac{2 e^{-k \epsilon \lambda/4}}{1 - e^{-\epsilon \lambda / 4}}.
\end{align*}
This converges to $0$ as $k \to \infty$ and hence $\p( \liminf_{t \to \infty} \CR_\lambda(t)/t \leq d - \epsilon) = 0$. Since $\epsilon > 0$ was arbitrary, the result follows.
\end{proof}

\begin{proof}[Proof of Theorem~\ref{thm:main_result}]
The first part is Proposition~\ref{prop:radius_bound} and the second part is Corollaries~\ref{cor:liminf_lower_bound} and~\ref{cor:limits_independent_of_intensity}.
\end{proof}

\subsection{Connection probabilities}\label{sec:connection}
Fix some point $x \in \R^\FD$. Let
\begin{align*}
    p_r^\lambda(t,x) \coloneqq \p( \dist(\CT_\lambda(t),x) \leq r),
\end{align*}
and note that $p_r^\lambda(t,x)$ is radially symmetric in $x$, that is, its dependence on $x$ is only on $|x|$. Thus, we may and will sometimes write $p_r^\lambda(t,d)$ when $|x|=d$. Furthermore, we let
\begin{align*}
    q_r^\lambda(t,x) = \p( \dist(\CT_\lambda(t),x) > r) = 1- p_r^\lambda(t,x).
\end{align*}
Assume for now that $\FD = 2$. Then, we have from Section~3.2 of \cite{antz2015network}, the following functional equation for $q_r^\lambda(t,x)$. For $|x|=d$, $t>d-r$ and $d>r$,
\begin{align}\label{eq:connection_equation}
    q_r^\lambda(t,d) = &\int_{-\alpha_0}^{\alpha_0} \int_0^{s_0(\alpha)} \frac{\lambda e^{-\lambda s}}{2\pi} q_r^\lambda(t-s,D(s,\alpha))^2 ds d\alpha \nonumber \\
    &+ \int_{\alpha_0}^{2\pi-\alpha_0} \int_0^t \frac{\lambda e^{-\lambda s}}{2 \pi} q_r^\lambda(t-s,D(s,\alpha))^2 ds d\alpha + e^{-\lambda t} f_r^\lambda(t,d),
\end{align}
where $\alpha_0 = \arcsin\tfrac{r}{d}$, $s_0(\alpha) = d \cos\alpha - \sqrt{r^2-d^2\sin^2\alpha}$, $D(s,\alpha) = \sqrt{d^2+s^2-2ds\cos\alpha}$, and
\begin{align*}
    f_r^\lambda(t,d) &= \p(\dist(\CT_\lambda(t),x) > r \,| \, \textup{no branching has occurred before time} \, t) \\
    &= \begin{cases}
                1-\frac{1}{\pi} \arccos\!\left( \frac{d^2+t^2-r^2}{2td} \right)\! , &\textup{if } d-r \leq t \leq \sqrt{d^2-r^2}, \\
                1-\frac{1}{\pi} \arcsin \frac{r}{d}, &\textup{if } t> \sqrt{d^2-r^2},
                \end{cases}
\end{align*}
with boundary conditions
\begin{align*}
    &q_r^\lambda(t,d) = 0, \quad \textup{if } d \leq r, \ t \geq 0, \\
    &q_r^\lambda(t,d) = 1, \quad \textup{if } d>r, \ 0 \leq t <d-r.
\end{align*}
While this does not yield an analytic solution, it does allow for numerical computation. Similar equations can be derived in any dimension. For the case $\FD = 3$, see Section~3.2 of \cite{gt2019decay}.

Now, allowing for general $\FD$, we have (see equation~(12) of \cite{gt2019decay}) that
\begin{align*}
	p_r^\lambda(t,x) \geq e^{-\lambda t} p_r^0(t,x).
\end{align*}
Moreover, it is obvious from the definition that if $r_1 \leq r_2$ and $t_1 \leq t_2$, then
\begin{align}\label{eq:monotonicity}
	p_{r_1}^\lambda(t_1,x) \leq p_{r_2}^\lambda(t_2,x).
\end{align}
Using Lemma~\ref{lem:scaling}, we have the following identities. Let $s > 0$ and $d = |x|$. Then,
\begin{align}\label{eq:scaling_probabilities}
	p_r^\lambda(t,x) = p_{r s/t}^{\lambda t/s}(s,x s/t) = p_{r/t}^{\lambda t}(1,x/t) = p_{r/d}^{\lambda d}(t/d,1).
\end{align}
Let $a > 1$ and note that by~\eqref{eq:monotonicity} and~\eqref{eq:scaling_probabilities},
\begin{align*}
	p_r^\lambda(t,x) \leq p_r^{a\lambda}(t,x/a).
\end{align*}
We expect that in the variables $\lambda$ and $x$, the functions $p_r^\lambda(t,x)$ do not display the same monotonicity. Indeed, one can expect that for $x$ close to $0$,  $\lambda_1 \leq \lambda_2$ implies that $p_r^{\lambda_1}(t,x) \leq p_r^{\lambda_2}(t,x)$ (since if the first branch misses the ball $B(x,r)$, then a larger branching intensity gives the tree more ``attempts'' at hitting it). On the other hand, we know that the conditional probability that $\CR_\lambda(t) = t$ given that a branching occurs is zero, so the maximal radius is only attained if the tree never branches, which is more likely to occur if $\lambda$ is smaller. Hence, it is reasonable to expect that $p_r^{\lambda_1}(t,x) \geq p_r^{\lambda_2}(t,x)$ for $x$ close to $\partial B(0,t)$ and $r$ small, whenever $\lambda_1$ and $\lambda_2$ are small. But for large $\lambda_1$ and $\lambda_2$, it seems reasonable to expect that the increase in the number of branches is what will impact the connection probability the most. In other words, for $x$ close to $\partial B(0,t)$, we expect that $\lambda \mapsto p_r^\lambda(t,x)$ is decreasing on some interval $[0,\lambda_0]$ and then increasing on $[\lambda_0,\infty)$. 

\begin{remark}\label{rmk:other_functional_equations}
For $r>0$, $\lambda > 0$, $x \in \R^\FD$ and $t > 0$, let $\CN_r^\lambda(t,x) = |\CS_\lambda(t) \cap B(x,r)|$, that is, $\CN_r^\lambda(t,x)$ is the number of branching points of $\CT_\lambda(t)$ that lie in $B(x,r)$, and let $\SN_r^\lambda(t,x) = \E[ \CN_r^\lambda(t,x)]$. Then, similarly to the above, one can derive a functional equation for $\SN_r^\lambda(t,x)$, see~\cite{goriachkin2023thesis}. For $D \subset \R^\FD$, we let $\CL_\lambda(t,D)$ be the total length of the intervals of $\CT_\lambda(t) \cap D$. If we write $\ell_r^\lambda(t,x) = \E[ \CL_\lambda(t,B(x,r))]$, then in the same way one can derive a functional equation for it. Moreover, similarly to the equations for $q_r^\lambda(t,x)$, the equations for $\SN_r^\lambda(t,x)$ and $\ell_r^\lambda(t,x)$ do not give us analytic solutions, but could be used for numerical computations.
\end{remark}

\subsubsection{Connection probabilities as $\lambda \to \infty$}\label{sec:connection_infinity}
Again, let us first focus on the case $\FD = 2$. We would like to consider $p_r^\infty(t,d) = \lim_{\lambda \to \infty} p_r^\lambda(t,d)$, provided that the limit exists. Equivalently, we can consider $q_r^\infty(t,d) = \lim_{\lambda \to \infty} q_r^\lambda(t,d)$ (provided that the limit exists). For now, assume that the limit exists (in some sufficiently good sense).

We note that if $\Ff_\lambda(s) = \lambda e^{-\lambda s}$, then $\Ff_\lambda \to \delta_0$ in the space of tempered distributions on the Schwartz space as $\lambda \to \infty$. In a space of distributions over a slightly more general function space (allowing for discontinuities), $\Ff_\lambda \to \delta_{0+}$ where $\delta_{0+}$ is the distribution acting on functions as $\delta_{0+}[\varphi] = \lim_{x \searrow 0} \varphi(x)$. Thus, taking the limit as $\lambda \to \infty$ in~\eqref{eq:connection_equation} it is reasonable to expect that
\begin{align*}
	q_r^\infty(t,d) = q_r^\infty(t,d)^2,
\end{align*}
that is, $q_r^\infty(t,d) \in \{0,1\}$, whenever $(t,d)$ is a continuity point of $q_r^\infty$. The same analysis can be carried out in the case $\FD = 3$. Next we prove that there is at least an interval of $d$ where it is true for arbitrary dimension $\FD$ (and we expect the statement to be true for all $d \in [0,1]$).

\begin{proposition}\label{prop:connection_probability_infinity}
Suppose that $d ,p,\lambda_0>0$ are such that $\p( \CR_\lambda(1) \geq d) \geq p$ for all $\lambda \geq \lambda_0$. Fix $x \in B(0,d)$ and let $r > 0$ be such that $|x| + r < d$ and $\epsilon = \min(d-|x|,1/100)$. Then, for each $a \in (0,1)$ and $\alpha > 0$, there exists $\lambda_\diamond = \lambda_\diamond(\alpha,a,\epsilon,\lambda_0,p,r)$ such that
\begin{align*}
p_r^\lambda(1,x) \geq 1 - (1+\alpha)e^{-(1-a)\lambda \epsilon /4},
\end{align*}
for all $\lambda \geq \lambda_\diamond$.
\end{proposition}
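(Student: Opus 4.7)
The approach is to mimic the two-stage scheme used in the proof of Lemma~\ref{lem:zero_one_law_radius}, replacing ``has radius at least $d-\epsilon$'' by ``hits $B(x,r)$''. Fix $\delta=\epsilon/2$ and $\beta=(1-a)/2$. As in~\eqref{eq:lower_bound_leaves_delta_half}, the first step is to observe that
\begin{align*}
\p\bigl(\CN_\lambda(\delta)\geq e^{(1+a)\lambda\epsilon/4}\bigr)\geq 1-e^{-(1-a)\lambda\epsilon/4}\!\left(1+\sum_{n=1}^\infty \frac{e^{-n\epsilon\lambda/2}}{n+1}\right)\!.
\end{align*}
Every leaf of $\CT_\lambda(\delta)$ lies in $\ol{B(0,\delta)}$ and, by Remark~\ref{rmk:independence_descending_trees}, its descending subtree is conditionally independent of the past given the state of the tree at time $\delta$.

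The substantive step is to show that for every leaf $y\in\ol{B(0,\delta)}$, the descending subtree of $y$ hits $B(x,r)$ with conditional probability at least some constant $q=q(|x|,r,d,\epsilon,p,\lambda_0)>0$, uniformly for $\lambda$ large. I would proceed as follows. Lemma~\ref{lem:scaling} identifies the descending subtree, after averaging out its (uniform) initial direction, with $y+(1-\delta)\CT_{\lambda(1-\delta)}(1)$, so hitting $B(x,r)$ is equivalent to $\CT_{\lambda(1-\delta)}(1)\cap B(x',r')\neq\emptyset$ with $x'=(x-y)/(1-\delta)$ and $r'=r/(1-\delta)$; a short computation using $|y|\leq\delta$, $|x|+r\leq d-\epsilon$ and $d\leq 1$ shows that $|x'|+r'<d$. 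Combining the hypothesis $\p(\CR_{\lambda(1-\delta)}(1)\geq d)\geq p$ with the rotational symmetry of $\CT_{\lambda(1-\delta)}$ then gives: conditional on the extremal event, the direction of the extremal point is uniform on $\CS^{\FD-1}$, so the probability that it falls within angle $\theta$ of $x'/|x'|$ is at least $p$ times the normalised surface measure of the $\theta$-cone, a positive constant for any $\theta>0$. The unique branch joining $0$ to this extremal point has length $1$ and displacement $\geq d$, hence is contained in the prolate ellipsoid with foci at its two endpoints and sum of focal distances $1$; this confines its visit to the sphere of radius $|x'|$ to a cap whose half-angle depends only on $|x'|,d$, and choosing $\theta$ small enough (in terms of $|x'|$, $d$ and $r'$) forces the visit into $B(x',r')$, producing $q>0$.

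Given $q>0$, the $N:=\lceil e^{(1+a)\lambda\epsilon/4}\rceil$ descending subtrees constitute independent hitting attempts, so the conditional probability that none hits $B(x,r)$ is at most $(1-q)^N=\exp(N\log(1-q))=o(e^{-(1-a)\lambda\epsilon/4})$; combining with the leaf-count estimate above yields $p_r^\lambda(1,x)\geq 1-(1+\alpha)e^{-(1-a)\lambda\epsilon/4}$ for $\lambda$ exceeding some $\lambda_\diamond$. The main obstacle is the geometric part of the middle step: the ellipsoidal confinement controls the transverse deviation of the length-$1$ path at the intermediate sphere only when $|x'|$ is comparable to $d$, and degenerates (the prolate ellipsoid becomes widest near its centre) when $|x'|$ is much smaller than $d$. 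In that regime I would iterate the leaf-creation inside the descending subtree once more to reduce the problem to a hitting problem on a larger ball where the ellipsoid bound is sharp, or alternatively invoke Lemma~\ref{lem:zero_one_law_radius} to obtain many extremal leaves rather than a single one, so that their uniform angular distribution makes at least one of them land in $B(x',r')$.
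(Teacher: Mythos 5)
Your overall skeleton is the same as the paper's: produce $\approx e^{c\lambda\epsilon}$ leaves by time of order $\epsilon$, note they all lie within distance $<d$ of $x$, bound below by a constant $q>0$ the probability that a single descending subtree hits $B(x,r)$, and finish with the product bound, which beats the first-stage error $e^{-(1-a)\lambda\epsilon/4}$. The gap is exactly where you flag it: the middle step. The prolate ellipsoid with foci $0$ and the extremal point and sum of focal distances $1$ has semi-minor axis $\tfrac12\sqrt{1-|E|^2}\le\tfrac12\sqrt{1-d^2}$, and at the sphere of radius $|x'|$ it only constrains the crossing point $P$ to satisfy $|P-E|\le 1-|x'|$; by the law of cosines this forces $\cos\angle(P,E)\ge\frac{d^2+2|x'|-1}{2|x'|d}$, which is a vacuous (negative) bound whenever $d^2+2|x'|<1$ — in particular for every $|x'|<d$ when $d\le 1/2$, which is the only regime in which the hypothesis $\p(\CR_\lambda(1)\ge d)\ge p$ is actually established (Proposition~\ref{prop:radius_bound}). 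So no choice of cone angle $\theta$ confines the visit to the sphere of radius $|x'|$ into $B(x',r')$ for small $r'$, and the argument does not produce $q>0$ except when $d$ is close to $1$. The proposed repairs do not obviously close this: in particular, ``many extremal leaves, one of which lands in $B(x',r')$'' cannot work, since extremal points lie at distance about $d$ from the origin while $B(x',r')\subset B(0,d)$.

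The paper's proof avoids tracking any branch geometry. For a source point $y$ with $|x-y|\le d-3\epsilon/4$, it works on the event $\{\CR_\lambda(1-\epsilon/4)\ge |x-y|\}$, whose probability is exponentially close to $1$ by Lemma~\ref{lem:zero_one_law_radius} (via~\eqref{eq:lower_bound_radius_exponential} and scaling), rather than conditioning on reaching distance $d$. On that event the tree crosses the sphere of radius $|x-y|$ about its root, and by rotational symmetry the (first) crossing point is uniformly distributed on that sphere; if it falls in the spherical cap within distance $r$ of $x$ — an event of probability $c_{r/|x-y|}^{\FD-1}/c^{\FD-1}\ge r^{\FD-1}/(2C)$ — then that crossing point is itself a point of the tree in $\ol{B(x,r)}$, and we are done. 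This gives the uniform per-leaf constant $q=r^{\FD-1}/(2C)$ with no ellipsoid or transverse-deviation control at all; substituting it into your final product bound recovers the stated estimate. If you replace your ellipsoid step by this ``uniform first crossing of the target's own sphere'' argument, your proof matches the paper's.
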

\begin{remark}
Proposition~\ref{prop:connection_probability_infinity} essentially states that as $\lambda \to \infty$, the entire ball $B(0,d)$ will be filled by $\CT_\lambda(1)$ and hence that if $B(x,r) \cap B(0,d) \neq \emptyset$, then $\CT_\lambda(1) \cap B(x,r) \neq \emptyset$. 
\end{remark}
\begin{proof}[Proof of Proposition~\ref{prop:connection_probability_infinity}]
Consider $x$, $r$ and $\epsilon$ as in the statement of the proposition. For a fixed $\beta \in (0,1)$ we have by~\eqref{eq:lower_bound_radius_exponential}, symmetry and Lemma~\ref{lem:scaling}, that for all $y \in B(0,d-\epsilon/2)$ with $|y| > r$,
\begin{align*}
	&\p( \dist(\CT_\lambda(1-\epsilon/4),y) \leq r) \\
	&\geq \frac{c_{r/|y|}^{\FD-1}}{c^{\FD-1}} \p(\CR_\lambda(1-\epsilon/4) \geq |y|) \\
	&\geq \frac{c_{r/|y|}^{\FD-1}}{c^{\FD-1}} \p(\CR_\lambda(1-\epsilon/4) \geq d-\epsilon/2) \\
	&\geq \frac{c_{r/|y|}^{\FD-1}}{c^{\FD-1}} \p\!\left(\CR_{\lambda(1-\epsilon/4)}(1) \geq d-\frac{\epsilon/4}{1-\epsilon/4} \right) \\
	&\geq \frac{c_{r/|y|}^{\FD-1}}{c^{\FD-1}} \left( 1 - \exp\!\left( e^{(1-\beta)\epsilon \lambda/8} \log(1-p) \right) \right) \! \left( 1 - e^{-\epsilon \beta \lambda/8} \! \left( 1 + \sum_{n=1}^\infty \frac{e^{-n \epsilon\lambda /8}}{n+1} \right) \right) \! ,
\end{align*}
where for $s > 0$, $c_s^{\FD-1}$ is the $(\FD-1)$-dimensional surface area of $\CS^{\FD-1} \cap B((1,0,\dots,0),s)$ and $c^{\FD-1}$ is the $(\FD-1)$-dimensional surface area of $\CS^{\FD-1}$. We remark that there exists a universal constant $C \geq 1$ such that $C^{-1} s^{\FD-1} \leq c_s^{\FD-1}/c^{\FD-1} \leq C s^{\FD-1}$ for all $s \in (0,1)$ and that, obviously, $c_s^{\FD-1}/c^{\FD-1} \leq 1$. 
By the same argument as in the last paragraph of Lemma~\ref{lem:zero_one_law_radius}, there exist $\lambda_\dagger = \lambda_\dagger(\beta,\epsilon,\lambda_0,p) > 0$ such that for $\lambda \geq \lambda_\dagger$,
\begin{align}
	\p( \dist(\CT_\lambda(1-\epsilon/4),y) \leq r) \geq \frac{c_{r/|y|}^{\FD-1}}{2 c^{\FD-1}} \geq \frac{1}{2C} \frac{r^{\FD-1}}{|y|^{\FD - 1}} \geq \frac{r^{\FD-1}}{2 C}.
\end{align}
In the case $|y| \leq r$, we obviously have that $\p( \dist(\CT_\lambda(1-\epsilon/4),y) \leq r) = 1$. Since for any $y \in \ol{B(0,\epsilon/4)}$, we have that $\dist(x,y) \leq d - 3\epsilon/4$, arguing as in the proof of Lemma~\ref{lem:zero_one_law_radius}, we have for $\lambda \geq \lambda_\dagger$, that
\begin{align*}
	p_r^\lambda(1,x) &\geq \p( \dist(\CT_\lambda(1),x) \leq r \, | \, \CN_\lambda(\epsilon/4) \geq e^{\lambda \epsilon a/4}) \p( \CN_\lambda(\epsilon/4) \geq e^{\lambda \epsilon a/4} ) \\
	&\geq \left(1 - \left( 1- \frac{r^{\FD-1}}{2 C} \right)^{\exp(\lambda \epsilon a /4)} \right) \! \left(1 - e^{-(1-a) \lambda\epsilon/4} \left( 1 + \sum_{n=1}^\infty \frac{e^{-n \lambda \epsilon/4}}{n+1} \right) \right) \\
	&= \left(1 - \exp\!\left(\frac{\lambda \epsilon a}{4} \log \left( 1- \frac{r^{\FD-1}}{2 C} \right)\right) \right)  \! \left(1 - e^{-(1-a) \lambda\epsilon/4} \left( 1 + \sum_{n=1}^\infty \frac{e^{-n \lambda \epsilon/4}}{n+1} \right) \right) \! .
\end{align*}
Similarly to the above, since $\exp(\tfrac{1}{4}\lambda \epsilon a \log ( 1- r^{\FD-1}/(2 C) )) = o(e^{-(1-a)\lambda\epsilon/4})$ and $\sum_{n=1}^\infty e^{-n \lambda \epsilon/4}/(n+1)$ converges to $0$ as $\lambda \to \infty$, it follows that for any $\alpha > 0$, there exists $\lambda_\diamond = \lambda_\diamond(\alpha,a,\beta,\epsilon,\lambda_0,p,r) \geq \lambda_\dagger$ such that
\begin{align*}
	p_r^\lambda(1,x) \geq 1 - (1 + \alpha) e^{-(1-a)\lambda \epsilon/4}.
\end{align*}
Finally, we note that we may drop the dependence of $\lambda_\diamond$ on $\beta$, since $\beta$ was just an arbitrary constant in $(0,1)$ chosen in this proof.
\end{proof}

The statement of Proposition~\ref{prop:connection_probability_infinity} essentially says that there is a ball centred at the origin which gets filled by the tree $\CT_\lambda(1)$ as $\lambda \to \infty$. When translating this to the case of $\CT_\lambda(t)$ via scaling (Lemma~\ref{lem:scaling}) this would then tell us that, while the process $\CT_\lambda(t)$ may not have the same sharp behaviour as $t \to \infty$, it would roughly be a disk of radius $t d$, where each point is at most distance $o(t)$ from $\CT_\lambda(t)$ for large $t$. 

\begin{proposition}\label{prop:convergence_time_1}
Suppose that $d ,p,\lambda_0>0$ are such that $\p( \CR_\lambda(1) \geq d) \geq p$ for all $\lambda \geq \lambda_0$. Then $\CT_\lambda(1) \cap B(0,d) \to B(0,d)$ in probability with respect to the topology determined by the Hausdorff distance as $\lambda \to \infty$.
\end{proposition}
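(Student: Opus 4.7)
The plan is to leverage Proposition~\ref{prop:connection_probability_infinity} via a finite-net argument. One direction of the Hausdorff distance is automatic since $\CT_\lambda(1) \cap B(0,d) \subset B(0,d)$, so $\CT_\lambda(1) \cap B(0,d) \subset B(B(0,d),\eta)$ for any $\eta > 0$. It therefore suffices to show that, with probability tending to $1$ as $\lambda \to \infty$, every point of $B(0,d)$ lies within $\eta$ of some point of $\CT_\lambda(1) \cap B(0,d)$.

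To this end, I would fix $\eta \in (0, 1/100)$ and cover the slightly shrunk closed ball $\overline{B(0, d - \eta/2)}$ by a finite $\eta/4$-net $\{x_1, \ldots, x_N\}$, with $N \leq C_\FD (d/\eta)^\FD$ for some dimensional constant $C_\FD$. For each $x_i$, since $|x_i| + \eta/4 < d$ and $\epsilon_i := \min(d - |x_i|, 1/100) \geq \eta/2$, Proposition~\ref{prop:connection_probability_infinity} applied with $r = \eta/4$, $a = 1/2$, $\alpha = 1$ yields
\begin{align*}
\p\bigl(\dist(\CT_\lambda(1), x_i) > \eta/4\bigr) \leq 2 e^{-\lambda \eta/16}
\end{align*}
for all $\lambda$ sufficiently large. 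A union bound then bounds the probability that this fails for some $i$ by $2 C_\FD (d/\eta)^\FD e^{-\lambda \eta/16}$, which vanishes as $\lambda \to \infty$.

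On the complementary good event, for each $i$ there is a tree point $z_i \in \CT_\lambda(1)$ with $|z_i - x_i| \leq \eta/4$, and the bound $|z_i| \leq |x_i| + \eta/4 \leq d - \eta/4 < d$ places $z_i$ in $\CT_\lambda(1) \cap B(0,d)$. Given any $y \in B(0,d)$, I would conclude as follows: if $|y| \leq d - \eta/2$, pick $x_i$ within $\eta/4$ of $y$ and apply the triangle inequality to get $|z_i - y| \leq \eta/2$; if $d - \eta/2 < |y| < d$, first radially project $y$ to $y' = (d - \eta/2) y/|y|$, losing at most $\eta/2$, then apply the previous case to $y'$. In either case $|z_i - y| \leq \eta$, which gives $\dist_H(\CT_\lambda(1) \cap B(0,d), B(0,d)) \leq \eta$ on the good event. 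Taking $\lambda \to \infty$ and then sending $\eta \to 0$ yields convergence in probability.

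The main analytic work is already packaged in Proposition~\ref{prop:connection_probability_infinity}, so no step above presents a genuine obstacle; the only technical subtlety is ensuring the candidate tree points do not spill outside $B(0,d)$ (which is necessary because the theorem is about $\CT_\lambda(1) \cap B(0,d)$, not $\CT_\lambda(1)$ itself), and this is handled by placing the net inside the slightly contracted ball $\overline{B(0, d - \eta/2)}$ with a margin of $\eta/4$.
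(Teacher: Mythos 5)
Your proof is correct and follows essentially the same route as the paper: a finite net of points inside a slightly contracted ball, Proposition~\ref{prop:connection_probability_infinity} applied to each net point, and a union bound (taking the maximum of the finitely many thresholds $\lambda_\diamond$). Your explicit margin argument guaranteeing the nearby tree points actually lie in $B(0,d)$ is a welcome extra care that the paper's proof leaves implicit.
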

\begin{proof}
Let $X_\delta = (\tfrac{\delta}{2} \Z^\FD) \cap B(0,d - \delta/4)$ and note that if $\dist( \CT_\lambda(1) \cap B(0,d), x) \leq \delta$ for each $x \in X_\delta$, then $\dist_H(\CT_\lambda(1) \cap B(0,d),B(0,d)) \leq \delta$, where $\dist_H$ denotes the Hausdorff distance. Note that the number of points in $X_\delta$ is upper bounded by $8 d^2 \delta^{-2\FD}$ (this bound is not sharp). Thus, fixing some $\alpha > 0$ and $a \in (0,1)$, we have by Proposition~\ref{prop:connection_probability_infinity}, that for each $\delta < 1/25$ and all $\lambda \geq \lambda_\diamond(\alpha,a,\delta/4,\lambda_0,p,\delta)$ (where $\lambda_\diamond$ is as in Proposition~\ref{prop:connection_probability_infinity}),
\begin{align}\label{eq:hausdorff_distance_decay}
	\p( \dist_H(\CT_\lambda(1) \cap B(0,d),B(0,d)) > \delta) &\leq \p( \exists x \in X_\delta: \dist(\CT_\lambda(1),x) > \delta/2) \nonumber \\
	&\leq \sum_{x \in X_\delta} \p( \dist(\CT_\lambda(1),x) > \delta) \nonumber \\
	&\leq 8 d^2 \delta^{-2\FD} (1+\alpha) e^{-(1-a)\lambda \delta/16}.
\end{align}
Thus, as $\lambda \to \infty$,~\eqref{eq:hausdorff_distance_decay} decays to $0$. Since $\delta > 0$ was arbitrary, the proof is done. 
\end{proof}

We record the following estimate that follows from the above proof.

\begin{lemma}\label{lem:holes_in_tree_time_t}
Suppose that $d ,p,\lambda_0>0$ are such that $\p( \CR_\lambda(1) \geq d) \geq p$ for all $\lambda \geq \lambda_0$. Fix $C > 8 d^2$ and $a \in (0,1)$. Then, there exists $t_* = t_*(\lambda,C,a,\delta,\lambda_0,p)$ such that
\begin{align*}
	\p( \exists x \in B(0,t d): \dist(\CT_\lambda(t),x) > \delta t) \leq C \delta^{-2\FD} e^{-\tfrac{(1-a)}{16} \lambda \delta t},
\end{align*}
for all $t \geq t_*$.
\end{lemma}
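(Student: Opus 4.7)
The plan is to reduce Lemma~\ref{lem:holes_in_tree_time_t} to the quantitative estimate \eqref{eq:hausdorff_distance_decay} appearing inside the proof of Proposition~\ref{prop:convergence_time_1}, by means of the scaling relation Lemma~\ref{lem:scaling}. The content of the lemma is really just a restatement of Proposition~\ref{prop:convergence_time_1} in the large-time regime rather than the large-intensity regime, and scaling is exactly the tool that relates these two regimes.

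First, I would apply Lemma~\ref{lem:scaling}, which says that $\CT_\lambda(t)/t$ and $\CT_{\lambda t}(1)$ have the same law. Under the bijection $y = x/t$, one has $\dist(\CT_\lambda(t),x) > \delta t$ if and only if $\dist(\CT_\lambda(t)/t, y) > \delta$, and $x \in B(0,td)$ if and only if $y \in B(0,d)$. Hence
\begin{align*}
\p( \exists x \in B(0,td) : \dist(\CT_\lambda(t),x) > \delta t) = \p( \exists y \in B(0,d) : \dist(\CT_{\lambda t}(1), y) > \delta).
\end{align*}

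Next, I would repeat verbatim the net argument from the proof of Proposition~\ref{prop:convergence_time_1}: cover $B(0,d-\delta/4)$ by the lattice $X_\delta = (\tfrac{\delta}{2}\Z^\FD) \cap B(0, d-\delta/4)$, whose cardinality is at most $8 d^2 \delta^{-2\FD}$, and apply the union bound together with Proposition~\ref{prop:connection_probability_infinity} with intensity $\lambda t$ in place of $\lambda$, the same $a \in (0,1)$, and some $\alpha>0$ chosen so that $8 d^2 (1+\alpha) \leq C$ (possible because the hypothesis $C > 8 d^2$ is in force). This yields, exactly as in \eqref{eq:hausdorff_distance_decay} but at intensity $\lambda t$,
\begin{align*}
\p( \exists y \in B(0,d) : \dist(\CT_{\lambda t}(1), y) > \delta) \leq 8 d^2 \delta^{-2\FD} (1+\alpha) e^{-(1-a) \lambda t \delta / 16} \leq C \delta^{-2 \FD} e^{-\tfrac{(1-a)}{16} \lambda \delta t},
\end{align*}
valid whenever $\lambda t \geq \lambda_\diamond(\alpha,a,\delta/4,\lambda_0,p,\delta)$, where $\lambda_\diamond$ is as in Proposition~\ref{prop:connection_probability_infinity}.

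Finally, I would set $t_* \coloneqq \lambda_\diamond(\alpha,a,\delta/4,\lambda_0,p,\delta)/\lambda$, so that $t \geq t_*$ forces $\lambda t \geq \lambda_\diamond$ and the bound above applies. There is no real obstacle here; the only subtlety is bookkeeping, namely checking that the dependence of $t_*$ on its parameters is the one advertised (the dependence on $d$ is absorbed into $\lambda_0$ and $p$ via the hypothesis) and that the constant $\alpha$ can be absorbed into the prefactor $C$ once $C > 8d^2$ is assumed.
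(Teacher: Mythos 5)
Your proposal is correct and follows essentially the same route as the paper's own proof: rescale via Lemma~\ref{lem:scaling} to reduce to $\CT_{\lambda t}(1)$, reuse the lattice/union-bound estimate \eqref{eq:hausdorff_distance_decay} (i.e.\ Proposition~\ref{prop:connection_probability_infinity} at intensity $\lambda t$), absorb $8d^2(1+\alpha)$ into $C$, and set $t_* = \lambda_\diamond/\lambda$.
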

\begin{proof}
By Lemma~\ref{lem:scaling} and~\eqref{eq:hausdorff_distance_decay}, letting $X_\delta$ be as in the proof of Proposition~\ref{prop:connection_probability_infinity}, and fixing $\alpha > 0$ and $a \in (0,1)$, we have that
\begin{align*}
	\p( \exists x \in B(0,t d): \dist(\CT_\lambda(t),x) > \delta t) &= \p( \exists x \in B(0,d): \dist(\CT_\lambda(t)/t,x) > \delta) \\
	&= \p( \exists x \in B(0,d): \dist(\CT_{\lambda t}(1),x) > \delta) \\
	&\leq \sum_{x \in X_\delta} \p( \dist(\CT_{\lambda t}(1),x) > \delta) \\
	&\leq 8 d^2 \delta^{-2\FD} (1+\alpha) e^{-\tfrac{(1-a)}{16} \lambda \delta t},
\end{align*}
whenever $\lambda t \geq \lambda_\diamond(\alpha,a,\delta/4,\lambda_0,p,\delta)$ and $\lambda_\diamond$ is as in Proposition~\ref{prop:connection_probability_infinity}. That is, if we pick $\alpha > 0$ such that $C = 8(1+\alpha) d^2$, then for all $t \geq t_*(\lambda,C,a,\delta,\lambda_0,p) = \lambda_\diamond(\alpha,a,\delta/4,\lambda_0,p,\delta)/\lambda$, we have that
\begin{align*}
	\p( \exists x \in B(0,t d): \dist(\CT_\lambda(t),x) > \delta t) \leq C \delta^{-2\FD} e^{-\tfrac{(1-a)}{16} \lambda \delta t}.
\end{align*}
Thus, the proof is complete.
\end{proof}

\begin{remark}
In the functional equations mentioned in Remark~\ref{rmk:other_functional_equations}, the same procedure as above (letting $\lambda \to \infty$), does not indicate any nontrivial behaviour. Indeed, it is obvious that the number of branching points and total length in a ball will be either $0$ (if the tree does not intersect it) or infinite (if the tree hits the ball, then as $\lambda \to \infty$, the number of branching points occurring in said ball will grow exponentially with $\lambda$ and hence the number and total length of intervals as well. This is reflected, for example, in the functional equation essentially turning into the equation $\ell_r^\infty(t,d) = 2 \ell_r^\infty(t,d)$ as $\lambda \to \infty$, and hence that $\ell_r^\infty(t,d) \in \{0,\infty\}$. That is, the expected total length of all the intervals of $\CT_\lambda(t)$ inside $B(d,r)$ as $\lambda \to \infty$ would be either zero or infinite.
\end{remark}

\begin{proof}[Proof of Theorem~\ref{thm:main_result_1}]
The first part is Proposition~\ref{prop:convergence_time_1} and the last part follows by combining Lemma~\ref{lem:holes_in_tree_time_t} with Proposition~\ref{prop:radius_bound}.
\end{proof}

\appendix

\section{Asymptotics of first success and exponential distributions}\label{app:asymptotics}
In this appendix we prove the necessary asymptotics for first success and exponential distributions, that are used to prove the radius bounds in Section~\ref{sec:radius}. For each $y \in (0,1)$, let $N_y \sim \Fs(y)$.
\begin{lemma}\label{lem:asymptotics_Fs}
We have that $yN_y \to \Exp(1)$ in distribution as $y \to 0$. Moreover,
if $0 < a < 1$ and $b>0$, then
\begin{align}
	\p( N_y \leq b y^{-a} ) \leq b \sum_{n=1}^\infty \frac{y^{n-a}}{n}.
\end{align}
\end{lemma}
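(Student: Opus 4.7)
The plan is to handle both assertions from the explicit tail formula $\p(N_y > k) = (1-y)^k$, which follows from summing the geometric probability mass function: $\p(N_y \geq k) = \sum_{j \geq k} y(1-y)^{j-1} = (1-y)^{k-1}$.

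For the convergence in distribution of $yN_y$ to $\Exp(1)$, I would fix $t > 0$ and write
\begin{align*}
	\p(yN_y > t) = \p(N_y > t/y) = (1-y)^{\lfloor t/y \rfloor}.
\end{align*}
Since $\lfloor t/y \rfloor = t/y + O(1)$ as $y \to 0^+$, and $(1-y)^{1/y} \to e^{-1}$, we obtain $\p(yN_y > t) \to e^{-t}$, which is the tail of $\Exp(1)$. As the limit distribution function is continuous on $(0,\infty)$, this gives convergence in distribution.

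For the tail bound, I would use $\log(1-y) = -\sum_{n \geq 1} y^n/n$ together with the elementary inequality $1 - e^{-x} \leq x$ for $x \geq 0$. Since $\p(N_y \leq k) = 1 - (1-y)^k$ is monotone in $k$, and $\lfloor b y^{-a} \rfloor \leq b y^{-a}$, we get
\begin{align*}
	\p(N_y \leq by^{-a}) = \p(N_y \leq \lfloor by^{-a} \rfloor) \leq 1 - (1-y)^{by^{-a}} = 1 - \exp\!\left( -by^{-a} \sum_{n=1}^\infty \frac{y^n}{n} \right) \leq b y^{-a} \sum_{n=1}^\infty \frac{y^n}{n},
\end{align*}
and factoring out $y^{-a}$ gives the claimed bound $b \sum_{n \geq 1} y^{n-a}/n$. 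Both steps are completely routine; I do not anticipate any real obstacle, as the lemma is essentially a packaging of standard geometric-distribution facts tailored to the form used in the proof of Proposition~\ref{prop:radius_bound}.
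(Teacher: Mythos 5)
Your proof is correct and follows essentially the same route as the paper: both arguments rest on the explicit geometric tail $(1-y)^{k}$, the expansion $\log(1-y)=-\sum_{n\ge1}y^n/n$, and the inequality $1-e^{-x}\le x$, with the floor handled by monotonicity rather than the paper's explicit factor $(1-y)^{\lfloor b/y^a\rfloor-b/y^a}$. No gaps; this is just a minor cosmetic variation on the paper's proof.
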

\begin{proof}
Fix $0 < a \leq 1$ and $b > 0$, and recall that $\log(1-y) = - \sum_{n=1}^\infty y^n /n$ whenever $|y| < 1$. We have that
\begin{align*}
	\p( N_y \leq b y^{-a}) &= 1 - (1-y)^{b/y^a} (1-y)^{\lfloor b/y^a \rfloor - b/y^a} \\
	&= 1 - e^{b y^{-a} \log(1-y)} (1-y)^{\lfloor b/y^a \rfloor - b/y^a} \\
	&= 1 - \exp\!\left(-b \sum_{n=1}^\infty \frac{y^{n-a}}{n} \right) (1-y)^{\lfloor b/y^a \rfloor - b/y^a}.
\end{align*}
Since $1 \leq (1-y)^t \leq 1/(1-y)$ whenever $-1 \leq t \leq 0$, we have that
\begin{align*}
	1 - \frac{1}{1-y} \exp\!\left(-b \sum_{n=1}^\infty \frac{y^{n-1}}{n} \right) \leq \p( yN_y \leq b) \leq 1 - e^{-b},
\end{align*}
and thus $y N_y \to \Exp(1)$ in distribution. Similarly, using that $e^{-t} > 1-t$ for all $t>0$, we have for $0 < a < 1$ that
\begin{align*}
	\p( N_y \leq b y^{-a}) \leq b \sum_{n=1}^\infty \frac{y^{n-a}}{n}.
\end{align*}
\end{proof}

For each $x > 0$, let $(T_x^j)_{j \in \N^+}$ be an i.i.d.\ collection of $\Exp(x)$-distributed random variables.

\begin{lemma}\label{lem:asymptotics_exponential_maximum}
Let $a > c > 0$ and $b > 0$. Then, for all $x > 0$,
\begin{align*}
	\p\!\left( \max_{j \leq be^{ax}} \, T_x^j \leq c \right) \leq C_{a,b,c,x} \exp( - b e^{(a-c)x}),
\end{align*}
where $C_{a,b,c,x} = \exp(-b e^{ax} \sum_{n=2}^\infty \tfrac{e^{-ncx}}{n})/(1-e^{-cx})$.
\end{lemma}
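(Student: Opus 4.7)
The plan is to compute the probability exactly using independence and then Taylor-expand the resulting exponential. Since the $T_x^j$ are i.i.d.\ $\Exp(x)$, the probability that a single one is at most $c$ equals $1-e^{-cx}$, so by independence
\begin{align*}
    \p\!\left( \max_{j \leq be^{ax}} T_x^j \leq c \right) = (1-e^{-cx})^{\lfloor b e^{ax} \rfloor}.
\end{align*}
Since $\lfloor b e^{ax}\rfloor \geq b e^{ax}-1$ and $1-e^{-cx} \in (0,1)$, this is at most $(1-e^{-cx})^{be^{ax}-1} = (1-e^{-cx})^{be^{ax}}/(1-e^{-cx})$.

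Next I would write $(1-e^{-cx})^{be^{ax}} = \exp(be^{ax}\log(1-e^{-cx}))$ and use the series expansion $\log(1-y) = -\sum_{n=1}^\infty y^n/n$ with $y = e^{-cx} \in (0,1)$. Separating the $n=1$ term from the sum yields
\begin{align*}
    (1-e^{-cx})^{be^{ax}} = \exp\!\left(-be^{(a-c)x}\right) \exp\!\left(-be^{ax}\sum_{n=2}^\infty \frac{e^{-ncx}}{n}\right)\!.
\end{align*}
Combining with the factor $1/(1-e^{-cx})$ from the floor correction gives exactly the claimed bound, with $C_{a,b,c,x}$ as defined in the statement.

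There is no serious obstacle here: the only subtlety is the floor correction $\lfloor be^{ax}\rfloor \geq be^{ax} - 1$, which is precisely what produces the $1/(1-e^{-cx})$ factor in $C_{a,b,c,x}$. The rest is a clean separation of the first-order term in the Taylor expansion of $\log(1-e^{-cx})$, entirely parallel to the calculation used to prove Lemma~\ref{lem:asymptotics_Fs}.
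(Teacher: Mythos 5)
Your proposal is correct and follows essentially the same route as the paper: exact computation via independence, the expansion $\log(1-e^{-cx})=-\sum_{n\geq 1}e^{-ncx}/n$ with the $n=1$ term separated, and a floor correction contributing the factor $1/(1-e^{-cx})$ (the paper phrases the latter as $(1-y)^t\leq 1/(1-y)$ for $t\in[-1,0]$, which is the same estimate as your $\lfloor be^{ax}\rfloor\geq be^{ax}-1$). No gaps.
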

\begin{proof}
We have that
\begin{align*}
	\p\!\left( \max_{j \leq b e^{ax}} \, T_x^j \leq c \right) &= \p( T_x^1 \leq c )^{\lfloor b e^{ax} \rfloor} = (1 - e^{-cx})^{b e^{ax}} (1 - e^{-cx})^{\lfloor b e^{ax} \rfloor - b e^{ax}} \\
	&= \exp( b e^{ax} \log(1-e^{-cx})) (1 - e^{-cx})^{\lfloor b e^{ax} \rfloor - b e^{ax}} \\
	&= \exp\!\left( - b e^{ax} \sum_{n=1}^\infty \frac{e^{-n c x}}{n} \right) (1 - e^{-cx})^{\lfloor b e^{ax} \rfloor - b e^{ax}} \\
	&\leq \exp\!\left( - b e^{ax} \sum_{n=1}^\infty \frac{e^{-n c x}}{n} \right) /(1-e^{-cx}) \\
	&\leq C_{a,b,c,x} \exp( - be^{(a-c)x}),
\end{align*}
where, in the second to last inequality, we again used the inequality $(1-y)^t \leq 1/(1-y)$ which holds for $t \in [-1,0]$.
\end{proof}

\bibliographystyle{abbrv}
\bibliography{bibliography}

\end{document}